\documentclass{amsart}
\usepackage{graphicx} 
\usepackage{dsfont,xcolor, hyperref}
\usepackage{enumerate}
\usepackage[nodayofweek]{datetime}

\definecolor{my-blue}{cmyk}{1,0.6,0,0}

\definecolor{my-green}{cmyk}{0.8,0,1,0.5}

\hypersetup{
colorlinks=true, 
linkcolor=my-blue, 
citecolor=my-green,
urlcolor=black
}

\newcommand\FF{{\mathbb F}}
\newcommand\ZZ{{\mathbb Z}}
\newcommand{\Fq}{{\FF_{\!q}}}

\newcommand{\cI}{\mathcal{I}}
\newcommand{\bK}{\mathbb{K}}

\newcommand{\one}{\mathds{1}}

\newcommand{\tr}{\mathrm{tr}} 

\newcommand{\degtau}{\deg_\tau} 
\newcommand{\degsigma}{\deg_\sigma}
\newcommand{\ord}{\mathrm{ord}}
\newcommand{\ordsigma}{\ord_\sigma}

\newcommand{\mot}{\mathsf{M}}  
\newcommand{\mothat}{\hat{\mot}^{\sigma}} 
\newcommand{\carl}{\mathsf{C}}  

\newcommand{\ps}[1]{[\![#1]\!]}  
\newcommand{\ls}[1]{(\!(#1)\!)}
\renewcommand{\sp}[1]{\{#1\}} 
\newcommand{\sps}[1]{\{\!\{#1\}\!\}} 
\newcommand{\sls}[1]{(\!\{#1\}\!)}

\newcommand{\partdef}[1]{ \left\{ \begin{array}{ll} #1 \end{array} \right. }

\newcommand{\vect}[1]{\text{\boldmath $#1$\unboldmath}} 
\newcommand{\svect}[2]{\left( \begin{matrix} #1_{1}\\ \vdots \\ #1_{#2}\end{matrix}\right)}

\newcommand{\basis}[2]{\{ #1_{1},\ldots, #1_{#2}\}}

\newenvironment{smatrix}{\left(\begin{smallmatrix}}{\end{smallmatrix}\right)}

\newcommand{\Diagonal}[2]{\begin{pmatrix} 
#1 & 0 & \cdots & 0\\ 
0 & \ddots & \ddots & \vdots\\
\vdots & \ddots & \ddots & 0 \\
0 & \cdots & 0 & #2 \end{pmatrix}
}

\DeclareMathOperator{\Mat}{Mat}
\DeclareMathOperator{\GL}{GL}
\DeclareMathOperator{\U}{U}
\DeclareMathOperator{\diag}{diag}

\theoremstyle{plain}
\newtheorem{thm}{Theorem}[section]
\newtheorem{cor}[thm]{Corollary}
\newtheorem{lem}[thm]{Lemma}
\newtheorem{prop}[thm]{Proposition}
\newtheorem{conj}[thm]{Conjecture}
\newtheorem{mainthm}{Theorem}

\theoremstyle{definition}
\newtheorem{defn}[thm]{Definition}
\newtheorem{exmp}[thm]{Example}
\newtheorem{rem}[thm]{Remark}
\newtheorem*{ack}{Acknowledgments}

\title{On purity of Anderson $t$-modules}
\author{Yen-Tsung Chen and Andreas Maurischat}
\newdate{date}{28}{07}{2026}
\date{\displaydate{date}}

\begin{document}

\begin{abstract}
	In the theory of abelian Anderson $t$-modules, pure Anderson $t$-modules play an important role.
	Namoijam and Papanikolas also introduced the notions of \emph{strictly pure} and \emph{almost strictly pure}
	$t$-modules which are special classes of pure $t$-modules.
	Whereas it is well known that not all pure $t$-modules are isomorphic to strictly pure ones (e.g.~the tensor powers of the Carlitz module), the same question for 
	almost strictly pure $t$-modules was not answered yet.
	In this article, we show that every pure Anderson $t$-module defined over a field $K$ is indeed isomorphic to an almost strictly pure 
	Anderson $t$-module after base change to a suitable finite algebraic extension of $K$. We also give a necessary and sufficient criterion when such an isomorphism to a strictly pure Anderson $t$-module is possible.
\end{abstract}



\maketitle

\setcounter{tocdepth}{1}
\tableofcontents

\section{Introduction}\label{sec:introduction}

In the classical theory of mixed motives over number fields, the pure motives are the building blocks in the sense that every mixed motive has a weight filtration whose factors are pure motives.
In the function field setting, the role of motives is played by abelian Anderson $t$-motives introduced by Anderson \cite{ga:tm}. Here, the pure $t$-motives still play a central role, although there are simple $t$-motives that are not pure (see e.g.~\cite[Example 6.3]{am:aefam}).

In \cite{cn-mp:hpqpam}, Namoijam and Papanikolas introduced other notions of purity for Anderson $t$-motives which they call \emph{strictly pure} and \emph{almost strictly pure} Anderson $t$-motives.\footnote{Actually, they defined these terms for Anderson $t$-modules, but in the introduction, we phrase them for $t$-motives, due to the anti-equivalence between abelian Anderson $t$-modules and abelian Anderson $t$-motives given in \cite[Theorem 1]{ga:tm}. More details are given in Section \ref{sec:anderson-t-modules}.} 
These $t$-motives are indeed special classes of pure $t$-motives (see \cite[Remark~5.5.5]{dg:bsffa} and \cite[Theorem~7.3.6]{dt:ffa} for strictly pure case, and see \cite[\S 2.5.2, p.~112]{uh-akj:pthshcff} and \cite[Section 4.5, p.~55]{cn-mp:hpqpam} for almost strictly pure case), and many prominent examples of pure $t$-motives, like the $t$-motive of Drinfeld modules, Carlitz tensor powers etc., are indeed almost strictly pure when considered in their standard representation (see \cite[Remark 4.90]{cn-mp:hpqpam}).

\medskip

Let us explain the issue in more detail.

We fix a finite field $\Fq$, and an arbitrary extension field $K$ of $\Fq$. We will consider
the skew polynomial ring $K\sp{\tau,t}$ in two indeterminants $t$, $\tau$,
\[  K\sp{\tau,t}=\left\{ \sum_{i=0}^n\sum_{j=0}^m \alpha_{ij}\tau^i t^j \,\middle|\, n,m\geq 0, \alpha_{ij}\in K\right\} \]
with multiplication uniquely given by additivity and the rules
\[ \tau \cdot \alpha = \alpha^q\cdot \tau,\quad t\cdot \alpha=\alpha\cdot t \]
for all $\alpha\in K$, as well as $\tau\cdot t=t\cdot \tau$.

This skew polynomial ring contains the skew polynomial rings $K\sp{\tau}$ and $K\sp{t}$. As $t$ commutes with all elements in $K$ by definition, the latter is the usual commutative polynomial ring, and we will write $K[t]$ for it.

An abelian Anderson $t$-motive over $K$ is a left $K\sp{\tau,t}$-module $\mot$ which is free of finite rank as left module over $K\sp{\tau}$, and free of finite rank as module over $K[t]$, and which satisfies a certain additional condition that will not be relevant here.

The Anderson $t$-motive $\mot$ is said to be \emph{pure} if there exist positive numbers $u,v\in \ZZ$, and a $K\ps{\frac{1}{t}}$-lattice $\tilde{\Lambda}$ in the (finite dimensional) $K\ls{\frac{1}{t}}$-vector space $K\ls{\frac{1}{t}}\otimes_{K[t]} \mot$, such that the $K\ps{\frac{1}{t}}$-spans of $t^u\tilde{\Lambda}$ and $\tau^v\tilde{\Lambda}$ are equal. The ratio $\frac{u}{v}$ is then called the \emph{weight} of $\mot$.

On the other hand, if $\mot$ is given with a fixed $K\sp{\tau}$-basis, we identify the $t$-motive with the row vectors over $K\sp{\tau}$,
\[\mot\cong K\sp{\tau}^{1\times d}, \]
and we have
\[   t\cdot (x_1,\ldots, x_d) = (x_1,\ldots, x_d)\cdot D \]
for some matrix $D\in \Mat_{d\times d}(K\sp{\tau})$ which we write as $D=D_0+D_1\tau+\ldots +D_s\tau^s$ with $D_j\in \Mat_{d\times d}(K)$ for all $j=0,\ldots, s$, and $D_s\ne 0$.

The $t$-motive $\mot$ (with this fixed choice of basis) is called \emph{strictly pure}, if the top coefficient matrix $D_s$ is invertible.
It is called \emph{almost strictly pure}, if for some $n\geq 1$, the top coefficient matrix of the $n$-th power $D^n$ is invertible.

Of course, strictly pure $t$-motives are almost strictly pure, and as mentioned above, almost strictly pure $t$-motives are pure.
That raises the questions whether the reverse implications also hold.

As an example that almost strictly pure $t$-motives exist which are not strictly pure, consider
 the second Carlitz tensor power $\carl^{\otimes 2}\cong K\sp{\tau}^{1\times 2}$ which is given by
\[  t\cdot (x_1,x_2) = (x_1,x_2)\cdot \begin{pmatrix} \theta & 1 \\ \tau & \theta
\end{pmatrix}, \]
for a certain $\theta\in K$, i.e., 
\[ D=\begin{pmatrix} \theta & 1 \\ \tau & \theta
\end{pmatrix} = \begin{pmatrix} \theta & 1 \\ 0 & \theta
\end{pmatrix} + \begin{pmatrix} 0 & 0 \\ 1 & 0
\end{pmatrix}\cdot \tau. \]
As the top coefficient matrix is not invertible, $\carl^{\otimes 2}$ is not strictly pure (with this choice of $K\sp{\tau}$-basis).\footnote{It can be shown that no choice of $K\sp{\tau}$-basis would make $\carl^{\otimes 2}$ strictly pure.} However,
\[ D^2 = \begin{pmatrix} \theta & 1 \\ \tau & \theta
\end{pmatrix}\cdot \begin{pmatrix} \theta & 1 \\ \tau & \theta
\end{pmatrix} = \begin{pmatrix} \theta^2+\tau & 2\theta \\ (\theta^q+\theta)\tau & \tau+\theta^2
\end{pmatrix} =\begin{pmatrix} \theta^2 & 2\theta \\ 0 & \theta^2
\end{pmatrix}+\begin{pmatrix} 1 & 0 \\ \theta^q+\theta & 1
\end{pmatrix}\cdot \tau,
\]
has the top coefficient matrix $\begin{pmatrix} 1 & 0 \\ \theta^q+\theta & 1
\end{pmatrix}$ which is invertible. So $\carl^{\otimes 2}$ is almost strictly pure.
So, it is an example of an almost strictly pure $t$-motive that is not strictly pure.

Lucas \cite{al:paspatm} showed that there are $t$-motives with fixed $K\sp{\tau}$-bases that are pure but not almost strictly pure. For each dimension $d\geq 2$, he provided such an example.

In this article, we investigate the case when we don't fix a $K\sp{\tau}$-basis aforehand.

\subsection*{Main results}

Our first main result implies that over algebraically closed fields, the terms \emph{pure} and \emph{almost strictly pure with respect to some basis} are indeed equivalent.

\begin{mainthm} (see Theorem \ref{thm:pure-implies-almost-strictly-pure}) \label{mainthm:A}
Let $\bK$ be an algebraically closed field containing $\Fq$, and let $\mot$ be a pure Anderson $t$-motive over $\bK$ of $\bK\sp{\tau}$-rank $d$.
Then there exists a $\bK\sp{\tau}$-basis $\{\kappa_1,\ldots, \kappa_d\}$ of $\mot$, so that $\mot$ is almost strictly pure with respect to $\{\kappa_1,\ldots, \kappa_d\}$. In addition, the $\bK\{\tau\}$-basis can be chosen to which $\mot$ is strictly pure, if and only if the reciprocal of the weight of $\mot$ is an integer.
\end{mainthm}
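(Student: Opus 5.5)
The plan is to work with the $\bK\ls{\frac{1}{t}}$-isocrystal $\mothat:=\bK\ls{\frac1t}\otimes_{\bK[t]}\mot$. I will use the slope (Dieudonné–Manin type) classification over the algebraically closed field $\bK$, which I take as standard. Write the weight as $\frac{u}{v}$ in lowest terms. Purity means $\mothat$ is isoclinic, and this gives a $\bK\ps{\frac1t}$-lattice $\tilde\Lambda$ with $\tau^v\tilde\Lambda=t^u\tilde\Lambda$.

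Since $\bK$ is algebraically closed, I can moreover choose a basis $e_1,\dots,e_r$ of $\tilde\Lambda$ in standard form. In this basis $\tau$ acts by the companion-like matrix: it sends $e_i\mapsto e_{i+1}$ for $i<v$ within each block of size $v$, and $e_v\mapsto t^u e_1$. Thus $\tau^v=t^u$ on the basis vectors.

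The key step is to translate this lattice data into a $\bK\sp{\tau}$-basis of $\mot$. Recall that $\mot$ has a finite filtration by $t$-degree coming from $\tilde\Lambda$: set $\mot_{\le n}:=\mot\cap t^n\tilde\Lambda$. The relation $\tau^v\tilde\Lambda=t^u\tilde\Lambda$ says that $\tau^v$ raises this filtration by exactly $u$. I will try to build $\kappa_1,\dots,\kappa_d$ as elements of $\mot$ whose leading parts, in the graded pieces $t^n\tilde\Lambda/t^{n-1}\tilde\Lambda$, form a basis adapted to the $\tau$-action. This should be done by successive approximation: choose lifts of the standard basis vectors, intersected with $\mot$, of minimal degree. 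One then checks that they form a $\bK\sp{\tau}$-basis by a degree count: $\dim_\bK \mot_{\le n}$ grows like $rn$, and the $\tau$-monomials of the $\kappa_i$ hit each graded piece exactly once.

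Next I compute the top coefficient of $D^{v}$, or rather of $D^{n}$ for suitable $n$, in this basis. Multiplication by $t^{v}$ acts on leading terms as $\tau^{u}$ composed with a permutation of the $\kappa_i$. It follows that $D^v$ has degree $u$ in $\tau$ with top coefficient a permutation matrix times a diagonal of units, which is invertible. This proves almost strict purity. I expect this to be the main obstacle: the leading-term bookkeeping must be sharp, since lower terms can contribute to the same $\tau$-degree, and the basis must be arranged so that the degrees of the $\kappa_i$ are congruent appropriately.

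For the second claim, suppose $v=1$, i.e.\ $\frac{v}{u}$ is an integer. Then the above shows that $t$ itself acts with leading term $\tau^{u}$ on each $\kappa_i$, so $D_s$ is invertible. Conversely, suppose $D$ has invertible top coefficient $D_s$. A strictly pure $\mot$ then has $\tau$-degrees multiplying: $t^n$ acts with degree exactly $ns$ on every basis element. Hence $\mot_{\le n}=\bigoplus_i \bK\sp{\tau}_{<ns}\kappa_i$ has dimension $dns$ while the $t$-rank is $r$. Comparing with $\dim=rn$ gives $r=ds$ and weight $\frac{r}{d}\cdot\frac1{?}$; more precisely, the weight is $\frac{1}{s}$, so its reciprocal is the integer $s$.
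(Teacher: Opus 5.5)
Your forward direction has a genuine gap at the step you yourself call the main obstacle. What is missing there is a construction, not bookkeeping. Minimal-degree lifts can at best give a $\bK\sp{\tau}$-basis adapted to the lattice. In the paper this is the Birkhoff factorization of Theorem~\ref{thm:matrix-decomposition}, $\vect{b}'=\diag(\tau^{k_1},\dots,\tau^{k_d})\vect{\kappa}'$ with $\vect{b}'$ a basis of $\Lambda$. The degrees $k_1\le\dots\le k_d$ are in general unequal, and you cannot arrange them: $\dim_{\bK}(\mot\cap\tau^n\Lambda)=\sum_i\max(0,k_i+n+1)$ determines them. For such a basis, the matrix $M=D^{-1}\Theta D$ of $t^u$ only satisfies $\degtau(M_{ij})\le v+k_j-k_i$. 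So entries above the diagonal can have $\tau$-degree $>v$, and then the top coefficient matrix is singular. Example~\ref{ex:Alexis-example} shows this concretely. There $\vect{\kappa}':=B\vect{\kappa}$ is adapted to a lattice with $t\Lambda=\tau\Lambda$, yet $t\vect{\kappa}'=\begin{pmatrix}\theta+\tau&\theta\tau^2\\0&\theta+\tau\end{pmatrix}\vect{\kappa}'$. Every power of this matrix has singular top coefficient: the $(1,2)$-entry of the $n$-th power has degree $n+1$, with leading coefficient $\sum_{a<n}\theta^{q^a}\neq 0$. The leading part of $t\kappa'_1$ is $\tau\kappa'_1+\theta\tau^2\kappa'_2$. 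So your claim that $t^u$ acts on leading terms as $\tau^v$ composed with a permutation of the $\kappa_i$ is false for adapted bases in general.

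The missing idea is the paper's correction step, Theorem~\ref{thm:conjugation}. You replace $\kappa_i$ by $\kappa_i+\sum_{j>i}g_{ij}\kappa_j$ with $\degtau(g_{ij})\le k_j-k_i$, which keeps the basis adapted. The $g_{ij}$ are found inductively over $l=0,\dots,k_d-k_1$. First a Lang isogeny and an LU decomposition make $c_{-v}(\Theta)$ lower triangular. Then at each level one solves, entry by entry in order of decreasing $j-i$, an equation $\alpha s^{q^v}-\beta s=\gamma$ with $\alpha\beta\neq0$.

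A graded, leading-term argument cannot replace this. Killing the leading part of $M_{ij}$ ($i<j$) leaves lower-order terms that may still have $\tau$-degree $>v$. The Dieudonn\'e--Manin normal form of $\tilde\Lambda$ does not help either, because it is destroyed as soon as you pass to a basis adapted to $\mot$. This correction step is where algebraic closedness genuinely enters; in Example~\ref{ex:Alexis-example} one needs a root of $s_0^q-s_0=\theta^q$.

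Separately, you swap $u$ and $v$ midway. With $\tau^v\tilde\Lambda=t^u\tilde\Lambda$, it is $t^u$ that should have $\tau$-degree $v$, and the strictly pure case is $u=1$, not $v=1$. As written, your argument would make $\carl^{\otimes 2}$ (weight $2$, so $u=2$, $v=1$) strictly pure, contradicting the theorem.

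Your converse is fine in substance. An invertible top coefficient in degree $s$ makes $\{\tau^j\kappa_i\}_{0\le j<s}$ a $\bK[t]$-basis, so $r=ds$ and $w=1/s$.
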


For an arbitrary extension field $K$ of $\Fq$, we conclude the following.

\begin{cor} (see Corollary \ref{cor:almost-strictly-pure-over-finite-extension})
     Let $\mot$ be a pure Anderson $t$-motive over $K$ of $K\sp{\tau}$-rank $d$.
Then there exists a finite field extension $L$ of $K$, and an 
$L\sp{\tau}$-basis $\{\kappa_1,\ldots, \kappa_d\}$ of $\mot_L:=L\sp{\tau}\otimes_{K\sp{\tau}}\mot$, so that $\mot_L$ is almost strictly pure with respect to $\{\kappa_1,\ldots, \kappa_d\}$. In addition, the $L\{\tau\}$-basis can be chosen to which $\mot_L$ is strictly pure, if and only if the reciprocal of the weight of $\mot$ is an integer.
\end{cor}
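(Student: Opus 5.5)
We deduce the corollary from Theorem~\ref{mainthm:A} by a descent argument. Fix an algebraic closure $\bK$ of $K$. Purity is preserved under base change: if $\tilde\Lambda$ is a $K\ps{\frac1t}$-lattice with $K\ps{\frac1t}$-spans of $t^u\tilde\Lambda$ and $\tau^v\tilde\Lambda$ equal, then $\bK\ps{\frac1t}\otimes\tilde\Lambda$ is a lattice in $\bK\ls{\frac1t}\otimes_{\bK[t]}\mot_{\bK}$ with the same property and the same $u,v$. Hence $\mot_{\bK}$ is pure of the same weight. Theorem~\ref{mainthm:A} then gives a $\bK\sp{\tau}$-basis $\{\kappa_1,\ldots,\kappa_d\}$ of $\mot_{\bK}$ with respect to which $\mot_{\bK}$ is almost strictly pure. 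If the reciprocal weight is an integer, the basis can even be chosen so that $\mot_{\bK}$ is strictly pure.

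The key step is to show that this basis is already defined over a finite extension. Fix a $K\sp{\tau}$-basis $\{m_1,\ldots,m_d\}$ of $\mot$, and write $\kappa_i=\sum_j c_{ij}m_j$ with $C=(c_{ij})\in\Mat_{d\times d}(\bK\sp{\tau})$. Since the $\kappa_i$ form a basis, $C$ is invertible with inverse $C'\in\Mat_{d\times d}(\bK\sp{\tau})$. Only finitely many coefficients in $\bK$ occur in the entries of $C$ and $C'$. Let $L$ be the field generated over $K$ by these coefficients together with all their $q^k$-th roots for $k$ up to the maximal $\tau$-degree occurring. This $L$ is finitely generated and algebraic over $K$, hence a finite extension.

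Then $C,C'\in\Mat_{d\times d}(L\sp{\tau})$ and $CC'=C'C=1$ already holds over $L\sp{\tau}$. So the $\kappa_i$ lie in $\mot_L$ and form an $L\sp{\tau}$-basis of it. The matrix $D_\kappa$ representing $t$ in this basis equals $C D_m C^{-1}$ up to the paper's row-vector conventions. Here $D_m$ has entries in $K\sp{\tau}$, so $D_\kappa$ lies in $\Mat_{d\times d}(L\sp{\tau})$, and it is the same matrix as over $\bK$. Its powers $D_\kappa^n$ and their top coefficient matrices are computed identically over $L$ and over $\bK$, and invertibility of a matrix over $L$ is the same as over $\bK$. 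Therefore $\mot_L$ is almost strictly pure, respectively strictly pure in the integral case, with respect to $\{\kappa_i\}$.

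For the converse in the ``if and only if'', suppose $\mot_L$ is strictly pure with respect to some $L\sp{\tau}$-basis. Base changing that basis to $\bK$ gives a basis for which $\mot_{\bK}$ is strictly pure, since the matrix $D$ is unchanged. Theorem~\ref{mainthm:A} then forces the reciprocal of the weight of $\mot_{\bK}$ to be an integer. This weight equals that of $\mot$: we showed purity is preserved with the same $u,v$, and the weight is well defined. The main obstacle is just bookkeeping the $\tau$-twists, i.e.\ making sure $L$ contains the required $q$-power roots. In fact including them is harmless, since products of elements of $L\sp{\tau}$ only produce $q$-th powers of coefficients, which stay in $L$.
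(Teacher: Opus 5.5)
Your proof is correct and follows the paper's argument: base change to $\bK$, apply Theorem~\ref{thm:pure-implies-almost-strictly-pure}, and descend the base-change matrix $C$ and its inverse to the finite extension $L$ generated by their finitely many coefficients. Your extra checks are sound (purity and weight preserved under base change, and the converse obtained by extending a strictly pure $L\sp{\tau}$-basis back to $\bK$), and, as you note yourself, adjoining the $q^k$-th roots is unnecessary because $L\sp{\tau}$ is already closed under multiplication.
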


Using Anderson's anti-equivalence of categories, the corresponding statement for abelian Anderson $t$-modules is the following.

\begin{mainthm} (see Theorem \ref{thm:almost-strictly-pure-over-extension}) \label{mainthm:B}
   Let $K$ be a field containing $\Fq$, and let $\phi$ be a pure Anderson $t$-module over $K$ of dimension $d$ and weight $w$.
Then there exists a finite extension $L$ of $K$, and an almost strictly pure $t$-module $\psi$ over $L$ such that
the base extension of $\phi$ to $L$ is isomorphic to $\psi$. 
The $t$-module $\psi$ over $L$ can be chosen to be strictly pure, if and only if $\frac{1}{w}\in\mathbb{Z}$.
\end{mainthm}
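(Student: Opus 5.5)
The plan is to reduce Theorem~\ref{mainthm:B} to the motivic statement, the Corollary above, via Anderson's anti-equivalence between abelian $t$-modules and abelian $t$-motives. Let $\phi$ be pure of dimension $d$ over $K$. We will use the standard representation $\mot(\phi)=\mathrm{Hom}_{\Fq}(\mathbb{G}_a^d,\mathbb{G}_a)\cong K\sp{\tau}^{1\times d}$, with $t$ acting by composition with $\phi_t$. In the coordinate basis, $t$ acts on row vectors by right multiplication with the matrix $D=\phi_t\in\Mat_{d\times d}(K\sp{\tau})$. Hence strict purity and almost strict purity of $\phi$, in the sense of Namoijam--Papanikolas, are exactly the corresponding conditions on $\mot(\phi)$ with respect to this standard basis. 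Purity and weight of $\phi$ are defined through $\mot(\phi)$, so $\mot(\phi)$ is pure of weight $w$. Moreover, $\mot(\phi)$ is abelian, since $\phi$ is abelian by the standing convention.

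Next we apply the Corollary to $\mot=\mot(\phi)$. This gives a finite extension $L/K$ and an $L\sp{\tau}$-basis $\{\kappa_1,\ldots,\kappa_d\}$ of $\mot_L$ with respect to which $\mot_L$ is almost strictly pure. When $1/w\in\ZZ$, the basis can be chosen so that $\mot_L$ is strictly pure. We must also check that base change is compatible with the anti-equivalence, i.e.\ $\mot(\phi)_L=L\sp{\tau}\otimes_{K\sp{\tau}}\mot(\phi)\cong\mot(\phi_L)$. This holds because both sides are $L\sp{\tau}^{1\times d}$ with $t$ acting through the same matrix $\phi_t$, now viewed over $L$.

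Now we translate the basis change back to $t$-modules. Write $\kappa_i=e_iU$ for the standard basis $e_i$, with $U\in\GL_d(L\sp{\tau})$; invertibility holds because both families are bases of a free module. In the new basis, $t$ acts via $D'=UDU^{-1}$. Set $\psi_t:=U\phi_tU^{-1}$, or the transpose-compatible version according to the paper's row/column convention. Then $U$ defines an isomorphism of $t$-modules $\phi_L\to\psi$ over $L$: indeed $U\in\GL_d(L\sp{\tau})$ is an automorphism of $\mathbb{G}_{a,L}^d$, and $U\phi_t=\psi_tU$. The module $\psi$ is again an Anderson $t$-module, since $d\psi_t$ satisfies the nilpotence condition. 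To see this, note that the constant term of $U$ is invertible, because $U^{-1}$ is again a $\tau$-polynomial matrix; hence $d\psi_t=U_0\,d\phi_t\,U_0^{-1}$. By construction the standard motive of $\psi$ is $\mot_L$ with basis $\{\kappa_i\}$, so $\psi$ is almost strictly pure, and strictly pure when $1/w\in\ZZ$.

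For the converse, suppose some $\psi\cong\phi_L$ is strictly pure. Then $\mot(\psi)$ is a strictly pure motive isomorphic to $\mot_L$, so by the ``only if'' part of the Corollary, $1/w\in\ZZ$. Since the weight does not change under base extension, this $w$ is the weight of $\phi$. This already follows from the Corollary, but it can also be seen directly: if the top coefficient $D_s$ of $\psi_t$ is invertible, take the lattice spanned by the basis. Then $t\tilde\Lambda$ and $\tau^s\tilde\Lambda$ span the same $L\ps{1/t}$-module, so the weight is $1/s$.

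The main obstacle is purely bookkeeping: keeping the row-vector and left-versus-right conventions consistent, so that the conjugated matrix really is the $\psi_t$ whose standard motive carries the chosen basis. All substantive content is contained in the Corollary.
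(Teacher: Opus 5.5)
Your proposal is correct and follows the paper's proof: you apply Corollary \ref{cor:almost-strictly-pure-over-finite-extension} to $\mot_\phi$, let $\psi$ be the $t$-module attached to $\mot_L$ with the new basis ($\psi_t=U\phi_tU^{-1}$), and conclude $\phi^L\cong\psi$ by the change-of-basis argument of Lemma \ref{lem:change-of-basis-is-isomorphic}, with the converse coming from the ``only if'' part of the Corollary (ultimately \cite[Remark~4.87]{cn-mp:hpqpam}). Your extra checks (nilpotence of $\partial\psi_t-\theta$ via the invertible constant term $U_0$, compatibility of base change) are fine. Only your optional ``direct'' weight argument is loose, and it is harmless because you rely on the Corollary anyway: the basis $\{\kappa_i\}$ spans a $\sigma$-lattice $\Lambda$ in $\mothat$ with $t\Lambda=\tau^s\Lambda$, not a $\ps{1/t}$-lattice, and you need Proposition \ref{prop:iso-of-completions}\eqref{prop:iso-of-completions:item:2} to pass to a $\bK\ps{1/t}$-lattice.
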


\begin{rem}
    We would like to mention that one could also prove our theorems using the $t$-comotive of the $t$-modules, as there is a duality between the $t$-motive and the $t$-comotive (see e.g.~\cite[Theorem 2.5.13]{uh-akj:pthshcff} or \cite[Theorem 1.2]{qg-am:pamfrfe}). However, using the $t$-motive is more comfortable as it has a natural left $K\sp{\tau}$-module structure.
\end{rem}

\subsection*{Applications}
    In what follows, we present two applications of our main results. Given two pure $t$-motives $\mot_1$ and $\mot_2$ over $K$, it is known that their tensor product over $K[t]$, given by $\mot:=\mot_1\otimes_{K[t]}\mot_2$, is again a pure $t$-motive. In addition, the weight of $\mot$ is the sum of the weights of $\mot_1$ and $\mot_2$. According to some explicit calculations, it has been shown that the tensor product of the $t$-motives of two Drinfeld modules defined over $K$ is almost strictly pure with respect to some canonical choices of the $K\{\tau\}$-basis. It is natural to ask if we can still have the almost strict purity when replacing the $t$-motives of Drinfeld modules by arbitrary $t$-motives of almost strictly pure $t$-modules. The first application of our main result gives an affirmative answer to this question for algebraically closed fields $\bK$.

    \begin{mainthm}(see Theorem \ref{thm:tensor-products}) \label{mainthm:C}
        Let $\phi$ and $\psi$ be two almost strictly pure $t$-modules defined over $\bK$ of weight $w_\phi$ and $w_\psi$ respectively. Let $\mot:=\mot_\phi\otimes_{\bK[t]}\mot_\psi$ be the tensor product of their $t$-motives. Then there is an almost strictly pure $t$-module $\rho$ defined over $\bK$ such that
        \[
            \mot\cong\mot_\rho.
        \]
        In other words, $\mot_\phi\otimes_{\bK[t]}\mot_\psi$ is almost strictly pure with respect to an appropriate $\bK\{\tau\}$-basis. If $1/w_\phi+1/w_\psi\in\mathbb{Z}$, then the $\bK\{\tau\}$-basis can be chosen to which $\mot_\phi\otimes_{\bK[t]}\mot_\psi$ is strictly pure.
    \end{mainthm}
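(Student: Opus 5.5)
The plan is to reduce everything to Theorem~\ref{mainthm:A}, applied to the single $t$-motive $\mot:=\mot_\phi\otimes_{\bK[t]}\mot_\psi$ over the algebraically closed field $\bK$. First I will check that $\mot$ is again an abelian Anderson $t$-motive. Tensor products over $\bK[t]$ of abelian $t$-motives are abelian, with $\tau$ acting diagonally. Next, I will check that $\mot$ is pure. Since $\phi$ and $\psi$ are almost strictly pure, they are pure (by the results recalled in the introduction). Choose lattices $\tilde\Lambda_\phi,\tilde\Lambda_\psi$ with $\tau^{v_\phi}\tilde\Lambda_\phi=t^{u_\phi}\tilde\Lambda_\phi$ and $\tau^{v_\psi}\tilde\Lambda_\psi=t^{u_\psi}\tilde\Lambda_\psi$ (spans over $\bK\ps{\frac1t}$). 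After replacing $(u,v)$ by suitable multiples, we may assume $v_\phi=v_\psi=v$. Then $\tilde\Lambda:=\tilde\Lambda_\phi\otimes\tilde\Lambda_\psi$ satisfies $\tau^{v}\tilde\Lambda=t^{u_\phi+u_\psi}\tilde\Lambda$. Hence $\mot$ is pure of weight $w=w_\phi+w_\psi$.

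Theorem~\ref{mainthm:A} now gives a $\bK\sp{\tau}$-basis of $\mot$ with respect to which $\mot$ is almost strictly pure. Via Anderson's anti-equivalence (Section~\ref{sec:anderson-t-modules}), this basis corresponds to a $t$-module $\rho$ over $\bK$, with $t$-action given by the matrix $D$ in that basis, such that $\mot\cong\mot_\rho$. This proves the first assertion.

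For the strict purity assertion, Theorem~\ref{mainthm:A} says a strictly pure basis exists if and only if $1/w=1/(w_\phi+w_\psi)\in\ZZ$. So the remaining step, which I expect to be the main obstacle, is to derive $1/(w_\phi+w_\psi)\in\ZZ$ from the stated hypothesis $1/w_\phi+1/w_\psi\in\ZZ$.

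To do this, I would use the extra structure: for an almost strictly pure $t$-module of dimension $d$ and $t$-motive rank $r$, the weight is $w=d/r$. I would try to write the weight of the tensor product through ranks and dimensions, with rank $r_\phi r_\psi$ and dimension $d_\phi r_\psi+d_\psi r_\phi$. I would then check whether the hypothesis, combined with these integrality constraints, forces $r_\phi r_\psi/(d_\phi r_\psi+d_\psi r_\phi)\in\ZZ$.

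I am not confident this step goes through. With $w_\phi=\tfrac12$ and $w_\psi=\tfrac13$ (Drinfeld modules of ranks $2$ and $3$), the hypothesis holds since $1/w_\phi+1/w_\psi=5$, yet the tensor weight is $\tfrac56$, whose reciprocal is not an integer. So I would first test such examples. If the implication fails, the conclusion is that the correct sufficient (and necessary) condition is the one delivered by Theorem~\ref{mainthm:A}, namely integrality of $1/(w_\phi+w_\psi)$, and the hypothesis as worded would need to be read in that sense.
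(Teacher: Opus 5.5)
Your argument for the first assertion is the paper's proof of Theorem~\ref{thm:tensor-products}. First, $\mot$ is pure of weight $w_\phi+w_\psi$; the paper cites Anderson's \S1.11 for this, and your tensoring of lattices after equalizing $v$ is essentially the argument behind it. For the abelianness of $\mot$ you should rely on that same result for \emph{pure} $t$-motives, not on an unsupported general claim about abelian ones. Second, Theorem~\ref{thm:pure-implies-almost-strictly-pure} gives a basis in which $t^n$ has invertible leading coefficient. Third, $\rho_t$ is taken to be the matrix of $t$ (not of $t^n$) in that basis, so $\rho_{t^n}$ has invertible leading coefficient.

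On the strict-purity clause your suspicion is correct, and you can state it as a definite refutation rather than a test to be run. The body version, Theorem~\ref{thm:tensor-products}, omits this clause, and the paper gives no argument for it. As printed it is false. If $\phi_t=B_0+\dots+B_s\tau^s$ with $B_s$ invertible, then the elements $\tau^i e_j$ with $0\le i<s$, $1\le j\le d$ form a $\bK[t]$-basis of $\mot_\phi$ (divide with remainder by $\phi_t$). Hence a strictly pure $t$-module has weight $1/s$. Since $w(\mot)=w_\phi+w_\psi$, two examples satisfy $1/w_\phi+1/w_\psi\in\ZZ$ yet admit no strictly pure basis. One is yours, Drinfeld modules of ranks $2$ and $3$, with tensor weight $5/6$. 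Another is $\carl\otimes\carl=\carl^{\otimes 2}$, with weights $1,1$ and tensor weight $2$; the introduction already notes it is strictly pure in no basis. The intended hypothesis must be $1/(w_\phi+w_\psi)\in\ZZ$. Under it, and in fact if and only if it holds, strict purity follows at once from Theorem~\ref{thm:pure-implies-almost-strictly-pure} applied to $\mot$, exactly as you concluded.
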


    For the second application, recall that for a given Drinfeld module $\phi$ defined over a global function field $K$, we denote by $\phi(K)$ the $\mathbb{F}_q[t]$-module whose underlying space is given by $K$ and the module structure comes from  $\phi$. It was known from the remark at the beginning of the proof of Theorem 5 in \cite{Denis+1992+285+302} that $\phi(K)$ is never finitely generated. Thus, a direct analog of the Mordell-Weil theorem fails in this setting. Nevertheless, Poonen \cite{bp:lhfmwtdm} proved that $\phi(K)$ is isomorphic to a free $\mathbb{F}_q[t]$-module of countably infinite rank $\aleph_0$ with a finite torsion module. Inspired by this result, a $t$-module $\phi$ defined over a global function field $K$ is said to be satisfying Poonen's Mordell-Weil theorem if the $\mathbb{F}_q[t]$-module $\phi(K)$ has the same structure as in the case of Drinfeld modules. Using the theory of local height functions, Kuan \cite{Kuan2022-kh} successfully proved that every almost strictly pure $t$-module defined over a global function field satisfies Poonen's Mordell-Weil theorem. As an application of our main result, we can further extend the result of Kuan to any pure $t$-module after a finite base extension.

    \begin{mainthm}(see Theorem \ref{thm:MordellWeil})
        Let $\phi$ be a pure $t$-module defined over a global function field $K$. Then there is a finite extension $L$ over $K$ such that the base extension of $\phi$ to $L$ satisfies Poonen's Mordell-Weil theorem, that is, the $\Fq[t]$-module $\phi^L(L)$ is the direct sum of its finite torsion submodule with a free $\Fq[t]$-module of rank $\aleph_0$.
    \end{mainthm}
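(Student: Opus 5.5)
We deduce the statement from Theorem~\ref{mainthm:B}, applied to $\phi$ over the global function field $K$, and then invoke Kuan's theorem \cite{Kuan2022-kh}.

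\emph{Step 1: pass to an almost strictly pure model.} By Theorem~\ref{mainthm:B} there is a finite extension $L$ of $K$, an almost strictly pure $t$-module $\psi$ over $L$, and an isomorphism $P\colon \phi^L\to\psi$ of $t$-modules over $L$. Concretely, $P\in \GL_d(L\sp{\tau})$ satisfies $P\phi_t=\psi_t P$. Since $L$ is a finite extension of a global function field, $L$ is itself a global function field.

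\emph{Step 2: transfer the module structure.} Evaluating $P$ on $L$-points gives a map $\phi^L(L)=L^d\to\psi(L)=L^d$. It is $\Fq$-linear and commutes with the action of $t$, so it is $\Fq[t]$-linear. Its inverse is given by $P^{-1}\in\Mat_d(L\sp{\tau})$ evaluated on $L$-points, so it is bijective. Hence $\phi^L(L)\cong\psi(L)$ as $\Fq[t]$-modules.

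\emph{Step 3: apply Kuan.} Kuan's theorem \cite{Kuan2022-kh} states that an almost strictly pure $t$-module over a global function field satisfies Poonen's Mordell--Weil theorem. Applied to $\psi$ over $L$, it shows that $\psi(L)$ is the direct sum of a finite torsion submodule and a free $\Fq[t]$-module of rank $\aleph_0$. The isomorphism from Step~2 carries the torsion submodule of $\psi(L)$ onto that of $\phi^L(L)$, so $\phi^L(L)$ has the same decomposition.

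\emph{Expected obstacle.} The only point needing care is that Kuan's hypotheses match ours. We must check that his notion of ``almost strictly pure'' is the one of Namoijam--Papanikolas used in this paper, i.e.\ that some power of $\psi_t$ has invertible leading coefficient matrix. We must also check that he needs no extra condition, such as definition over $\Fq(t)$-type base fields or separability, which a finite extension $L$ might fail to satisfy. If such a condition were present, we would try to choose $L$ in Theorem~\ref{mainthm:B} to satisfy it, using the freedom to enlarge $L$. All the substantial content is in Theorem~\ref{mainthm:B}.
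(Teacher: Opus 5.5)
Your proposal is correct and follows the paper's proof: you apply Theorem~\ref{thm:almost-strictly-pure-over-extension} to get an almost strictly pure $\psi$ over a finite extension $L$ with $\phi^L\cong\psi$ via some matrix in $\GL_d(L\sp{\tau})$, transport the $\Fq[t]$-module structure along the induced bijection $\phi^L(L)\cong\psi(L)$, and invoke Kuan's theorem for $\psi(L)$. Your extra remarks, that $L$ is again a global function field and that $P^{-1}$ gives the inverse map on $L$-points, make explicit details the paper leaves implicit.
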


\subsection*{Outline of the paper}

In Section \ref{sec:notation},
we introduce the basic notation on skew polynomial rings, skew power series rings and skew Laurent series rings that we use in this article.
Section \ref{sec:matrices} is devoted to structural theorems on square matrices over such skew Laurent series rings, and we prove a non-commutative Birkhoff factorization for such matrices (Theorem \ref{thm:matrix-decomposition}) as well as a theorem on special representatives in conjugacy classes (Theorem \ref{thm:conjugation}). These two theorems will be the computational essence in proving our main theorems.

In Section \ref{sec:anderson-t-modules}, we introduce abelian Anderson $t$-modules and their $t$-motives, and we give the definitions of the notion \emph{pure}, \emph{strictly pure} and \emph{almost strictly pure} for $t$-modules and $t$-motives, and we discuss the relations between them.

Our main theorems \ref{mainthm:A} and \ref{mainthm:B} are proven in Section \ref{sec:main-theorem}. A preparatory step for the proof is an observation of the second author in \cite{am:aefam} which allows to rephrase purity of a $t$-motive $\mot$ in terms of $K\sps{\tau^{-1}}$-lattices inside $K\sls{\tau^{-1}}\otimes_{K\sp{\tau}}\mot$ (see Proposition \ref{prop:iso-of-completions} and Corollary \ref{cor:stable-lambda}).

The applications of our result that we mentioned above are given in Section \ref{sec:applications}.
We conclude our paper with two examples of pure $t$-modules in Section \ref{sec:examples}. Here, we explicitly determine almost strictly pure $t$-modules that are isomorphic to the given ones by applying the algorithms that are provided by the proofs of the main theorems.

\begin{ack}
    The authors would like to thank D.~Thakur and \"O.~\"Ulkem for their helpful comments on a previous version of this paper.
\end{ack}

\section{Notation}\label{sec:notation}

Let $\Fq$ be the field with $q$ elements, $K$ a field containing $\Fq$, and $\bK$ an algebraic closure of $K$.

$\tau:\bK\to \bK,x\mapsto x^q$ is the Frobenius automorphism, and $\sigma=\tau^{-1}$ its inverse.

We use the skew polynomial ring 
\[  \bK\sp{\tau}=\left\{ \sum_{i=0}^n \alpha_{ij}\tau^i \,\middle|\, n\geq 0, \alpha_{i}\in \bK\right\} \]
with multiplication uniquely given by additivity and the rule
$\tau \cdot \alpha = \alpha^q\cdot \tau$
for all $\alpha\in \bK$, the skew power series ring 
\[ \bK\sps{\sigma}=\left\{ \sum_{i=0}^\infty \alpha_i\sigma^i \,\middle|\, \alpha_i\in \bK\right\}, \] 
and the skew Laurent series ring 
\[ \bK\sls{\sigma}=\left\{ \sum_{i=i_0}^\infty \alpha_i\sigma^i \,\middle|\, i_0\in \ZZ, \alpha_i\in \bK\right\}, \]
with $\sigma \cdot \alpha = \alpha^{1/q}\cdot \sigma$ for all $\alpha\in \bK$. 
Of course, $\bK\sls{\sigma}$ contains $\bK\sps{\sigma}$. Further, the ring $\bK\sp{\tau}$ is naturally embedded into $\bK\sls{\sigma}$ via
\[ \sum_{i=0}^n \alpha_i\tau^i \mapsto \sum_{i=0}^n \alpha_i\sigma^{-i}. \]

For $f\in \bK\sls{\sigma}$, $n\in \ZZ$, we define the \emph{$n$-th twist} of $f$ as $f^{(n)}=\sigma^{-n}\cdot f\cdot \sigma^n\in \bK\sls{\sigma}$, i.e.,~
writing $f=\sum_{l=k}^\infty \alpha_l\sigma^l$, we have
\begin{equation}\label{eq:n-th twist}
     f^{(n)} = \sum_{l=k}^\infty \alpha_l^{q^n}\sigma^l .
\end{equation}

For $0\ne f=\sum_{l=k}^\infty \alpha_l\sigma^l\in \bK\sls{\sigma}$, and $w\in \ZZ$, we define the \emph{$\sigma$-order of $f$} and the \emph{$w$-th coefficient of $f$} as
\[  \ordsigma(f)=\inf \{ l \mid \alpha_l\ne 0\} \in \ZZ,\quad \text{and}\quad  c_w(f)=\alpha_w,  \]
respectively.
We will also use the \emph{$\sigma$-degree of $f$},
\[  \degsigma(f)=\sup \{ l \mid \alpha_{l}\ne 0\} \in \ZZ \cup \{\infty\}, \]
but actually only if $f\in \bK\sp{\tau,\sigma}$, i.e.~the supremum is strictly smaller than $\infty$.

For $f=0$, we will set $\ordsigma(f)=\infty$, $\degsigma(f)=-\infty$, and $c_w(f)=0$ for all $w\in \ZZ$.

We remark, that $f\in \bK\sls{\sigma}$ belongs to $\bK\sp{\tau}$, if and only if $\degsigma(f)\leq 0$, in which case
\[ f=\sum_{l=k}^0 \alpha_l\sigma^l=\sum_{m=0}^{-k} \alpha_{-m}\tau^m\in \bK\sp{\tau}. \]
We denote its \emph{degree as a polynomial in $\tau$} by $\degtau(f)$, and observe that 
\[\degtau(f)=-\ordsigma(f).\]

Observe also that by the explicit formula \eqref{eq:n-th twist} for the twist, it is clear that $\bK\sp{\tau}$ and $\bK\sps{\sigma}$ are stabilized by the twist, and that $\ordsigma$, $\degsigma$ and $\degtau$ are invariant under twisting.

We extend all these notation to matrices $D\in \Mat(\bK\sls{\sigma})$, by setting
\begin{align*}
    D^{(n)} &=(D_{ij}^{(n)})_{ij}\in \Mat(\bK\sls{\sigma}),\\
     c_w(D) &=(c_w(D_{ij}))_{ij}\in \Mat(\bK) , \\
     \ordsigma(D) &= \min_{i,j} \ordsigma(D_{ij}),\\
     \degsigma(D)&= \max_{i,j} \degsigma(D_{ij}),\quad \text{and}\\
     \degtau(D)&= \max_{i,j} \degtau(D_{ij}).
\end{align*}

Finally, we introduce the following convention for bases of free modules. Let $R$ be a ring and $M$ be an $R$-module that is free of rank $d<\infty$. Let $\{b_1,\dots,b_d\}$ be any $R$-basis of $M$. We introduce the abbreviation
\[
    \vect{b}=\svect{b}{d}\in\Mat_{d\times 1}(M)
\]
for these tuples of elements of $M$, and say that such a tuple is a basis, meaning that the elements of the tuple build a basis.

\section{Matrices over skew Laurent series rings}\label{sec:matrices}

In this section, we prove two structural theorems on matrices in $\GL_d(\bK\sls{\sigma})$. The first one is 
a non-commutative version of the Birkhoff factorization, whereas the second one is about special elements in conjugacy classes.

\begin{thm}\label{thm:matrix-decomposition}
    Let $C\in \GL_d(\bK\sls{\sigma})$. Then there exist $A\in \GL_d(\bK\sps{\sigma})$, $B\in \GL_d(\bK\sp{\tau})$, and a diagonal matrix $D=\diag(\sigma^{k_1},\ldots, \sigma^{k_d})$ with integers $k_1,\ldots,k_d$ such that
    \[ C= A\cdot D \cdot B .\]    
    Moreover, the matrices can be chosen so that $k_1\leq k_2\leq \ldots \leq k_d$. The matrices can also be chosen so that $k_1\geq k_2\geq \ldots \geq k_d$.
\end{thm}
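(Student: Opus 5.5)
The idea is to read left multiplication by $\GL_d(\bK\sps{\sigma})$ as row operations with skew power series coefficients, and right multiplication by $\GL_d(\bK\sp{\tau})$ as column operations with skew polynomial coefficients. We then bring $C$ to diagonal form by induction on $d$. Permutation matrices lie in both $\GL_d(\bK\sps{\sigma})$ and $\GL_d(\bK\sp{\tau})$, and $P\diag(\sigma^{k_1},\ldots,\sigma^{k_d})P^{-1}$ is again of the form $\diag(\sigma^{k_{\pi(1)}},\ldots,\sigma^{k_{\pi(d)}})$. Hence once any factorization $C=ADB$ exists, replacing $(A,D,B)$ by $(AP^{-1},PDP^{-1},PB)$ yields either ordering. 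So it suffices to prove existence.

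\textbf{Step 1: clear the first column.} Every nonzero $f\in\bK\sls{\sigma}$ can be written $f=u\sigma^{k}$ with $k=\ordsigma(f)$ and $u\in\bK\sps{\sigma}^\times$. Moreover, if $\ordsigma(g)\ge k$ then $g=(g\sigma^{-k})\sigma^k$ with $g\sigma^{-k}\in\bK\sps{\sigma}$. Since $C$ is invertible, its first column is nonzero. We swap an entry of minimal $\sigma$-order to the top, normalize it to $\sigma^k$ by a left unit, and subtract $\bK\sps{\sigma}$-multiples of the first row to kill the rest of the column. This gives
\[ C\sim\begin{pmatrix}\sigma^k & r\\ 0 & C'\end{pmatrix},\qquad C'\in\GL_{d-1}(\bK\sls{\sigma}). \]
By induction $C'=A'D'B'$, so we may assume $C'=D'=\diag(\sigma^{m_2},\ldots,\sigma^{m_d})$, with $r$ changed to $rB'^{-1}$.

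\textbf{Step 2: clear $r$ as far as possible.} Adding $\sigma^k p$ with $p\in\bK\sp{\tau}$ via column operations removes from $r_j$ exactly the terms of $\sigma$-order $\le k$. Adding $a\sigma^{m_j}$ with $a\in\bK\sps{\sigma}$ via row operations removes the terms of order $\ge m_j$. After these operations each $r_j$ is a finite sum $\sum_{k<l<m_j}\alpha_l\sigma^l$. The problem thus decouples into $2\times 2$ blocks
\[ \begin{pmatrix}\sigma^k & x\\ 0&\sigma^m\end{pmatrix},\qquad x=\sum_{k<l<m}\alpha_l\sigma^l, \]
which interact only through the first row. I would handle one $j$ at a time and use a descent argument.

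\textbf{Step 3 (main obstacle): the $2\times 2$ descent.} Suppose $x\ne0$ with $l=\ordsigma(x)$, so $k<l<m$. After a unit normalization the entry $x$ becomes a pivot of strictly smaller order than $\sigma^m$ in its column but larger than $k$. We swap columns and redo Step 1 on this block, pivoting on $x$ and clearing $\sigma^m$ below it. The $\sigma$-order of the determinant (well defined up to units, e.g.\ via a Dieudonné-type argument or just by tracking the diagonal) is preserved. So the new diagonal exponents are $l$ and $k+m-l$, both strictly between $k$ and $m$. Hence the spread $\sum_{i<j}|k_i-k_j|$, or simply $m-k$ in the block, strictly decreases, and the process terminates with $x=0$.

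Making this rigorous in the $d\times d$ setting is where I expect the work to lie. One must check that the operations on one block do not reintroduce bad terms in the others; the cleanest route is to re-run the whole induction and use the total spread of exponents, together with the invariant $\sum k_i$, as the terminating measure. Alternatively, one can choose the pivot in Step 1 among all entries with a maximal-order tie-break so that $k\ge m_j-1$ for all $j$, in which case Step 2 already kills $r$ completely.
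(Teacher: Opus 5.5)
\textbf{There is a genuine gap: the case $d\ge 3$ is not proved.}

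Your reduction to existence, Step 1, Step 2 and the $2\times 2$ descent are correct, so your argument proves the theorem for $d\le 2$. The gap is exactly the one you flag, and neither of your remedies closes it.

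In $\begin{pmatrix}\sigma^k & r\\ 0 & D'\end{pmatrix}$ the blocks share the first row. Write $r_j=u\sigma^l$. The descent on the $j$-th block normalises row $1$ by $u^{-1}$, subtracts $\sigma^{m_j-l}$ times it from row $j$, and swaps columns. This writes $-\sigma^{m_j-l}u^{-1}r_n$ into row $j$ for every $n\ne 1,j$. So $D'$ stops being diagonal. After re-running the induction on the lower right block, nothing relates the new exponents to the old ones, so the claimed decrease of a ``total spread'' is unsupported.

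Your second remedy is false as formulated: choosing the pivot among the entries or columns of the current matrix does not force $k\ge m_j-1$. Take $C=\diag(\sigma^2,\sigma^2)\begin{pmatrix}1&\tau^5\\ \tau^5&1+\tau^{10}\end{pmatrix}=\begin{pmatrix}\sigma^2&\sigma^{-3}\\ \sigma^{-3}&\sigma^2+\sigma^{-8}\end{pmatrix}$. The best admissible pivot has order $-3$, Step 1 gives $k=-3$ and $m_2=7$, and $r$ keeps the term $\sigma^2$.

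\textbf{The missing idea: optimise over the whole orbit $C\cdot\GL_d(\bK\sp{\tau})$, not over entries.} Let $k$ be the maximum of $\ordsigma(Cp)$ over all first columns $p$ of matrices in $\GL_d(\bK\sp{\tau})$.
\begin{enumerate}
\item The maximum exists, since $\ordsigma(Cp)\le\ordsigma(p)-\ordsigma(C^{-1})\le-\ordsigma(C^{-1})$.
\item It is unchanged under $C\mapsto ACB$ with $A\in\GL_d(\bK\sps{\sigma})$ and $B\in\GL_d(\bK\sp{\tau})$. Indeed $\ordsigma(Av)=\ordsigma(v)$, and $B$ permutes the admissible $p$.
\item Start Step 1 with a maximising column. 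After your Steps 1 and 2, the $j$-th column $(r_j,0,\dots,\sigma^{m_j},\dots,0)^{T}$ would have order $>k$ if $m_j\ge k+2$. Maximality therefore gives $m_j\le k+1$ for all $j$.
\item Then Step 2 alone kills $r$, and no descent is needed.
\end{enumerate}
Equivalently: whenever some $r_j\ne 0$ survives, restart Step 1 from column $j$. The pivot exponent then increases strictly and is bounded by $-\ordsigma(C^{-1})$, so the process stops.

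\textbf{Comparison with the paper.} With this fix your proof takes a genuinely different route. The paper first reaches lower triangular form with $\sigma$-powers on the diagonal using only row operations. It then cleans rows $i=2,\dots,d$ one at a time; its task (3) is your $2\times2$ descent applied to the largest $j$ with $C_{ij}\ne0$.

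The paper's argument runs into the same coupling. Its new $j$-th row is $\sigma^{k_j-l}$ times the old $i$-th row plus a multiple of the old $j$-th row. So it contains $\sigma^{k_j-l}C_{in}$ for $n<j$, contrary to the claim that the upper rows stay diagonal. For $\begin{pmatrix}\sigma^{10}&0&0\\ 0&\sigma^{10}&0\\ \sigma^3&\sigma^5&1\end{pmatrix}$, the procedure as written stops at $\begin{pmatrix}\sigma^{10}&0&0\\ \sigma^{8}&\sigma^5&0\\ 0&0&\sigma^5\end{pmatrix}$, which is not diagonal. The maximality argument repairs both proofs.
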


\begin{rem}
The theorem also holds for perfect fields $\bK$ that are not algebraically closed. This will be obvious from the proof.
As we only need the theorem for algebraically closed $\bK$, we stated it this way to not overload notation.
\end{rem}

\begin{proof}[Proof of Theorem \ref{thm:matrix-decomposition}]
    We will show that we can transform $C$ into such a diagonal matrix $D$ by performing elementary $\bK\sp{\tau}$-column operations and elementary $\bK\sps{\sigma}$-row operations.

    The column operations correspond to multiplication from the right with matrices in $\GL_d(\bK\sp{\tau})$, and the row operations correspond to multiplication from the left with matrices in $\GL_d(\bK\sps{\sigma})$.

    Be also aware that for the column operations the scalars have to be multiplied from the right.

\textbf{Step 1: Turn $C$ into a lower triangular matrix with powers of $\sigma$ on the diagonal:}

Firstly, in the last column, choose the index $i_0$ such that the $\sigma$-order of the $i_0$-th entry is minimal among the $\sigma$-orders of that column and switch the $i_0$-th row of the matrix with the last row. Then all other entries in the last column are $\bK\sps{\sigma}$-multiples of the last entry.
Therefore, we can subtract suitable $\bK\sps{\sigma}$-multiples of the last row from the other rows such that the non-diagonal entries of the last column become $0$. After rescaling the last row by the suitable factor in $\bK\sps{\sigma}^\times$, we can achieve that the $(d,d)$-entry becomes a power of $\sigma$ (equal to the $\sigma$-order of the former $(d,d)$-th entry). 
Hence, we achieved that the last column has the desired form.

We repeat this process successively for $j=d-1$ to $1$ with the upper left $(j\times j)$-submatrix. So afterwards the matrix $C$ has been transformed into lower triangular form with diagonal entries $\sigma^{k_1},\ldots, \sigma^{k_d}$ for integers $k_1,\ldots,k_d$.

\medskip

\textbf{Step 2: Inductively for $i=2$ to $d$ achieve that the off-diagonal entries in the $i$-th row are $0$:}

For fixed $i$, assume that we achieved this step for all the previous rows.
For achieving that also the off-diagonal entries in the $i$-th row become $0$, we repeat the following three tasks until the if-condition in task \eqref{item:step-2} is not fulfilled anymore.
        \begin{enumerate}
            \item \label{item:step-1} For all $j<i$ for which the $(i,j)$-th entry $C_{ij}=\sum_{l=k}^\infty \alpha_l\sigma^l$ has $\sigma$-order $k<k_i$, let
            $f=\sigma^{-k_i}\cdot \sum_{l=k}^{k_i} \alpha_l\sigma^{l}=\sum_{l=k}^{k_i} (\alpha_l)^{q^{k_i}}\tau^{k_i-l}\in \bK\sp{\tau}$, and
            subtract the $f$-multiple of the $i$-th column from the $j$-th column. After that the $\sigma$-order of 
            the diagonal entry $C_{ii}$ is strictly smaller than the entries $C_{ij}$ for $j<i$.
            	Since, the first $i-1$ entries in the $i$-th column are zero, this doesn't change the first $i-1$ rows.
            \item \label{item:step-2a} For all $j<i$ such that the $\sigma$-order of the $(i,j)$-th entry is larger than or equal to that of the $(j,j)$-th entry, subtract a suitable $\bK\sps{\sigma}$-multiple of the $j$-th row from the $i$-th row to kill the $(i,j)$-th entry.
            \item \label{item:step-2} If there is some $j<i$ with $C_{ij}\ne 0$, choose the largest such $j$. By the first two steps, the $\sigma$-order $l:=\ordsigma(C_{ij})$ of this $(i,j)$-th entry has to satisfy $k_j>l>k_i$, since $C_{jj}=\sigma^{k_j}$ and $C_{ii}=\sigma^{k_i}$, and we write 
            \[ C_{ij}=u\sigma^l\]
            with $u\in \bK\sps{\sigma}^\times$. Multiply $C$ from the left with the matrix
            \[ \begin{pmatrix}
                1 && && &&  && \\
                 &\ddots & && && && \\
                 && 1 && && && \\
                 &&  && && && \\
                &&& -\sigma^{k_j-l}u\sigma^{l-k_j} & & \sigma^{k_j-l} &&& \\
                &&&& \ddots &&&& \\
                &&& 0 & &u^{-1}  &&&   \\       
                 &&  && && && \\
                && &&  && 1&& \\
                && &&  && &\ddots& \\
                && &&  && && 1 
            \end{pmatrix} \in \GL_d(\bK\sps{\sigma}) \]
            differing from the identity matrix only in the entries $(j,j)$, $(j,i)$, and $(i,i)$, and afterwards swap the $j$-th and the $i$-th column. 
            The resulting matrix $\tilde{C}$ is again lower triangular. Namely, we have 
            \begin{align*}
            & \text{for }m<j: &\\
                \tilde{C}_{m,n}&=C_{m,n}=0, & \text{if } n>m; n\ne i,j\\
                \tilde{C}_{m,j}&=C_{m,i}=0 \text{ and }\tilde{C}_{m,i}=C_{m,j}=0, & \\[2mm]
            & \text{for }m=j: &\\
                \tilde{C}_{j,n}&=-\sigma^{k_j-l}u\sigma^{l-k_j}C_{j,n}+\sigma^{k_j-l} C_{i,n}=0+0=0      & \text{if } n>j, n\ne i\\
                \tilde{C}_{j,i}&=-\sigma^{k_j-l}u\sigma^{l-k_j}C_{j,j}+\sigma^{k_j-l} C_{i,j} &\\ &=-\sigma^{k_j-l}u\sigma^{l}+\sigma^{k_j-l}u\sigma^{l}=0, &\\[2mm]
            & \text{for }j<m<i: &\\
                \tilde{C}_{m,n}&=C_{m,n}=0, & \text{if } n>m, n\ne i\\
                \tilde{C}_{m,i}&=C_{m,j}=0, & \\[2mm]
            & \text{for }m=i: &\\
                \tilde{C}_{i,n}&=-u^{-1}C_{i,n}=0,     & \text{if } n>i\\[2mm]
            & \text{and for }m>i: &\\
                \tilde{C}_{m,n}&=C_{m,n}=0 .     & \text{if } n>m\\
            \end{align*}            
            In the upper $(i-1)$ rows only the entry $C_{jj}$ has changed to $\sigma^{k_j+(k_i-l)}$ and on the rest of the diagonal of the matrix only the $\sigma$-power of the $i$-th entry has changed: We now have $C_{ii}=\sigma^l$ with $l>k_i$. 
        \end{enumerate}
        The previous loop ends after finitely many steps, since in each loop the $\sigma$-order of $C_{ii}$ increases, but the $\sigma$-order of the diagonal entries $C_{jj}$ ($j<i$) stay the same or decrease.
        After this loop is finished, the off-diagonal entries in the $i$-th row are $0$. This finishes the induction step.

For achieving that the exponents $k_1,\ldots,k_d$ are non-decreasing or non-increasing, we only have to conjugate the obtained diagonal matrix by permutation matrices which permutes the diagonal entries accordingly.
\end{proof}

For the next theorem, we first need special subgroups of unipotent upper triangular matrices.

\begin{defn}
For $\vect{k}=(k_1,\ldots,k_d)\in \ZZ^d$ with $k_1\leq k_2\leq \dots \leq k_d$, we denote
\[  \U(\vect{k}) := \left\{ \begin{pmatrix}
        1 & g_{12} & \cdots & g_{1d}\\
        	& \ddots & \ddots & \vdots \\
         & & 1 & g_{d-1,d}\\
         & & & 1
    \end{pmatrix} \in \GL_d(\bK\sp{\sigma}) \,\, \middle| \,\, \degsigma(g_{ij})\leq k_j-k_i \right\}. \]
\end{defn}

\begin{lem}\label{lem:U-is-group}
The set $\U(\vect{k})$ is a subgroup of $\GL_d(\bK\sp{\sigma})$.
\end{lem}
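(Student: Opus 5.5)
We must show that $\U(\vect{k})$ contains the identity, is closed under multiplication, and is closed under inversion. Here $\bK\sp{\sigma}$ is the skew polynomial ring in $\sigma$; entries are polynomials in $\sigma$, so $\degsigma(g_{ij})\ge 0$ or $g_{ij}=0$. The key tool is that for $f,g\in\bK\sls{\sigma}$ we have $\degsigma(fg)\le\degsigma(f)+\degsigma(g)$. This holds because $\sigma^a\alpha=\alpha^{q^{-a}}\sigma^a$, so twisting does not change degrees. The same bound gives $\degsigma(f+g)\le\max(\degsigma f,\degsigma g)$. It is convenient to set $g_{ii}=1$, which has degree $0=k_i-k_i$, and $g_{ij}=0$ for $i>j$, which has degree $-\infty$. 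With this convention the condition on $G\in\U(\vect{k})$ reads: $G$ is upper unitriangular and $\degsigma(G_{ij})\le k_j-k_i$ for all $i,j$.

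The identity clearly lies in $\U(\vect{k})$. For closure under products, take $G,H\in\U(\vect{k})$. Then $GH$ is upper unitriangular, since the product of upper unitriangular matrices over any ring is again upper unitriangular. For its entries,
\[ \degsigma((GH)_{ij})\le\max_m\bigl(\degsigma(G_{im})+\degsigma(H_{mj})\bigr)\le\max_m\bigl((k_m-k_i)+(k_j-k_m)\bigr)=k_j-k_i. \]
Nonzero terms occur only for $i\le m\le j$, and the remaining terms contribute $-\infty$.

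For inverses, write $G=1+N$ with $N$ strictly upper triangular, so $N^d=0$ and $G^{-1}=\sum_{r=0}^{d-1}(-1)^rN^r$ is a two-sided inverse. Each $N^r$ satisfies the degree bound $\degsigma((N^r)_{ij})\le k_j-k_i$, by the same telescoping estimate applied inductively. Hence $G^{-1}$ is upper unitriangular, its entries satisfy the bound, and $G^{-1}\in\U(\vect{k})$.

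The only point needing care is the degree inequality in the noncommutative ring: the product of leading terms $a\sigma^m\cdot b\sigma^n=a b^{q^{-m}}\sigma^{m+n}$ shows the degree is exactly additive for nonzero factors. The hypothesis $k_1\le\dots\le k_d$ is not needed for the algebra. It only guarantees that the bounds $k_j-k_i$ above the diagonal are nonnegative, so the set is nontrivial.
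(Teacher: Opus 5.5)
Your proof is correct and takes the same approach as the paper. Closure under products comes from the telescoping degree estimate $(k_m-k_i)+(k_j-k_m)=k_j-k_i$, which the paper leaves as an exercise and you carry out. Inverses come from the finite geometric series in the nilpotent strictly upper triangular part, exactly as the paper does with $\one_d+T+\dots+T^{d-1}$.
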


\begin{proof}
Proving that $\U(\vect{k})$ is closed under multiplication is a direct computation, and we leave it as an exercise for the reader.
Since a unipotent upper triangular matrix $U$ can be written as $U=\one_d-T\in \GL_d(\bK\sp{\sigma})$ with a nilpotent matrix $T$, and $T^d=0$, the inverse of $U$ is explicitly given as
$U^{-1}=\one_d+T+T^2+\cdots + T^{d-1}$. One then easily checks that for $U\in \U(\vect{k})$, also $U^{-1}\in \U(\vect{k})$.
\end{proof}

\begin{thm}\label{thm:conjugation}
Let $\vect{k}=(k_1,\ldots,k_d)\in \ZZ^d$ with $k_1\leq k_2\leq \dots \leq k_d$, and
let $\Theta\in \GL_d(\bK\sls{\sigma})$ satisfy $v:=-\ordsigma(\Theta)>0$, as well as $c_{-v}(\Theta)\in \GL_d(\bK)$.
Then there exist a matrix $G\in \U(\vect{k})$ such that the entries of the matrix $H:=G^{-1}\Theta G$ satisfy
\[  \ordsigma(H_{ij})>-v+k_j-k_i \quad \forall 1\leq i <j\leq d. \]
\end{thm}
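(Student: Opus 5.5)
We build $G$ as a product of elementary unipotent matrices $E_{ij}(g)=\one_d+g\,e_{ij}$ with $i<j$ and $\degsigma(g)\leq k_j-k_i$. Each such matrix lies in $\U(\vect{k})$, and so does any product of them by Lemma~\ref{lem:U-is-group}. The strategy is to clear the offending coefficients of the strictly upper triangular part of $\Theta$ one at a time, using conjugation by these elementary matrices. We work with the ``defect'' of an entry: for $i<j$, the entry $H_{ij}$ is bad if $\ordsigma(H_{ij})\leq -v+k_j-k_i$. The coefficients that matter are the finitely many $c_w(H_{ij})$ with $w\leq -v+k_j-k_i$, since $H_{ij}$ is a Laurent series with finitely many negative-order terms below any bound.

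The process is organized by the distance $j-i$ from the diagonal. For each $m$, we first handle all pairs $(i,j)$ with $j-i=m$, processing $m=1,2,\ldots,d-1$ in turn. For a fixed pair, let $w=\ordsigma(H_{ij})\leq -v+k_j-k_i$ be its leading exponent. We conjugate by $E=E_{ij}(g)$ with the monomial $g=\beta\sigma^{s}$, where $s=w+v\leq k_j-k_i$, so that $E\in\U(\vect{k})$.

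Modulo terms of higher $\sigma$-order, $E^{-1}HE$ changes the $(i,j)$ entry by $H_{ii}g-gH_{jj}$. Its $\sigma^{w}$-coefficient is obtained from the leading coefficient $\Gamma:=c_{-v}(\Theta)$ (invariant under the conjugations, as we check). Explicitly, one needs to solve an equation of the shape
\[ \gamma_{ii}\beta^{q^{v}}-\beta\,\gamma_{jj}^{(\cdot)}=-c_w(H_{ij}) \]
for $\beta\in\bK$, with twists coming from moving $\sigma^{-v}$ past $\beta$. More precisely, one needs to solve for the $1\times1$ block, or for the whole diagonal block of $\Gamma$ if $\Gamma$ is not diagonal.

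For the scalar case this is a separable polynomial equation $a\beta^{q^v}-b\beta=c$ in $\beta$, with $a=\gamma_{ii}\neq0$ (resp.\ with the appropriate invertible coefficient). It is solvable because $\bK$ is algebraically closed and $v>0$ makes the degree positive. Hence the leading term of the $(i,j)$ entry is removed and its order strictly increases.

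The key bookkeeping is to check that this conjugation does not spoil the already-treated entries and does not create new bad coefficients of lower order. Entries changed by $E_{ij}(g)$ are those in row $j$ (column side: column $j$ gets column $i$ times $g$) and column $i$ (row side). One checks with the weights $k$ that the change in entry $(r,j)$, namely $H_{ri}g$, has order at least $\ordsigma(H_{ri})+s$. A bound $\ordsigma(H_{ri})\geq -v$ (and $>-v+k_i-k_r$ once cleaned) gives $\ordsigma\geq -v+k_j-k_r$ plus an improvement, so only entries farther from the diagonal, or the same entry at higher order, are affected. This is why we order by increasing $j-i$, and within a fixed entry by increasing order $w$.

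Since there are finitely many bad coefficients per entry (bounded below by $-v$ and above by $-v+k_j-k_i$), the procedure terminates. The final $G$ is the finite product of the $E$'s.

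The main obstacle is exactly this bookkeeping. First, the perturbations created in entries on diagonals already cleared must be shown to land strictly above the threshold $-v+k_j-k_r$. Second, when $\Gamma$ is not diagonal, the leading-coefficient equation is a system rather than a single scalar equation. I would first try to reduce to $\Gamma$ upper triangular, or treat the equation in vector form $\Gamma$-twisted Lang-type $X^{(v)}$-equation, whose solvability over algebraically closed $\bK$ follows from Lang's theorem (an equation $\Gamma X^{(v)}-X\Gamma'=C$ has solutions as an étale polynomial system). If that is messy, I would instead induct on $d$, treating the last column as a block and solving a vector equation of this Lang type at each order.
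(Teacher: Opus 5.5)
There is a genuine gap, located where you yourself put the ``main obstacle''. The first problem is the leading order $w=-v$ (your case $s=0$), which your scheme does not handle. There $\Gamma=c_{-v}(\Theta)$ is \emph{not} invariant: conjugating by $E_{ij}(\beta)$ with $\beta\in\bK$ replaces $\Gamma$ by $E_{ij}(\beta)^{-1}\Gamma E_{ij}(\beta)^{(v)}$, whose $(i,j)$ entry is $\Gamma_{ij}+\Gamma_{ii}\beta^{q^v}-\beta\Gamma_{jj}-\Gamma_{ji}\beta^{q^v+1}$. The quadratic term has the same order, $\Gamma_{ii}$ may vanish, and clearing one entry at a time can be impossible. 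For $d=3$ and $\Gamma$ the cyclic permutation matrix with $\Gamma_{12}=\Gamma_{23}=\Gamma_{31}=1$, this entry equals $1$ for every $\beta$. Moreover, the conclusion forces $c_{-v}(H)$ to be \emph{lower} triangular, since every entry above the diagonal has order $>-v$, and conjugations of positive $\sigma$-order never change $c_{-v}$. So reducing $\Gamma$ to upper triangular form is the wrong target.

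The missing idea is the paper's Lang step. Choose $F\in\GL_d(\bK)$ with $F^{-1}F^{(v)}=\Gamma$. Write $F=LU$ with $L$ lower triangular and $U$ unipotent upper triangular; if necessary first replace $F$ by $PF$ for a permutation matrix $P$, which does not change $F^{-1}F^{(v)}$. Then replace $\Theta$ by $U\Theta U^{-1}$: its leading coefficient is $L^{-1}L^{(v)}$, lower triangular with non-zero diagonal.

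The second problem is that, once $\Gamma$ is lower triangular, your bookkeeping for $s\geq 1$ is false and your processing order is backwards. Your scalar equation $\Gamma_{ii}\beta^{q^v}-\beta\Gamma_{jj}^{(-s)}=-c_w(H_{ij})$ is fine; the side effects are not. Conjugating at $(i,j)$ adds $H_{ri}g$ to the entry $(r,j)$ for $i<r<j$. Here $H_{ri}$ lies \emph{below} the diagonal, is never cleaned, and has order exactly $-v$ whenever $\Gamma_{ri}\neq 0$. The perturbation then has order $-v+s$, which can be $\leq -v+k_j-k_r$, so an entry nearer the diagonal that you already cleared becomes bad again. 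For example, with $\vect{k}=(0,0,N)$ and $\Gamma_{21}\neq 0$, clearing $(1,3)$ at order $-v+s$ spoils $(2,3)$. Your estimate ``$>-v+k_i-k_r$ once cleaned'' only applies when $r<i$. Likewise, clearing one entry through all its bad orders before moving on fails, because later low-order conjugations disturb higher-order coefficients of entries already treated.

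The paper organizes by $\sigma$-order first. For $l=1,\ldots,k_d-k_1$ it conjugates by $\one_d-S_{[l]}\sigma^l$, with $S_{[l]}$ supported on $\{(i,j)\mid i<j,\ k_j-k_i\geq l\}$. This leaves all coefficients of order $<-v+l$ unchanged and changes the order-$(-v+l)$ coefficient by $-S_{[l]}\Gamma^{(-l)}+\Gamma S_{[l]}^{(v)}$. Because $\Gamma$ is lower triangular, the $(i,j)$ equation has leading part $\Gamma_{ii}(S_{[l]})_{ij}^{q^v}-(S_{[l]})_{ij}\Gamma_{jj}^{(-l)}$ and otherwise involves only $(S_{[l]})_{mn}$ with $n-m>j-i$. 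The equations are therefore solved starting at the corner $(1,d)$ and moving \emph{towards} the diagonal, which is the opposite of your order. Your increasing order would be the right one if $\Gamma$ were upper triangular, but the conclusion forces it to be lower triangular.
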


\begin{proof}
For $0\leq l\leq k_d-k_1$, we let
\[  \cI_l:=\{ (i,j) \mid 1\leq i<j\leq d, k_j-k_i\geq l \}. \]
Then for every matrix $S_{[l]}\in \Mat_d(\bK)$ with $(S_{[l]})_{ij}=0$ for all $(i,j)\not\in \cI_l$, the matrix
$U_{[l]}=\one_d-S_{[l]}\sigma^l$ belongs to $\U(\vect{k})$.

Inductively, for $l=0,\ldots ,k_d-k_1$, we will construct a matrix $\Theta_{[l+1]}\in \GL_d(\bK\sls{\sigma})$ as the conjugate
of a previously constructed $\Theta_{[l]}$ (respectively of $\Theta_{[0]}=\Theta$) by such a matrix $U_{[l]}=\one_d-S_{[l]}\sigma^l$, i.e.,
\[  \Theta_{[l+1]} = U_{[l]}\Theta_{[l]}(U_{[l]})^{-1} \]
 such that
\[  \ordsigma((\Theta_{[l+1]})_{ij})>-v+\min \{ l, k_j-k_i\} \]
 for all $(i,j)$ with $1\leq i<j\leq d$. 

Then
\[ G= U_{[k_d-k_1]}\cdot \dots \cdot U_{[1]}\cdot U_{[0]} \text{ and } H=\Theta_{[k_d-k_1+1]} \]
satisfy the properties given in the statement of the theorem, since $\U(\vect{k})$ is a group by Lemma \ref{lem:U-is-group}, and 
$k_d-k_1=\max\{ k_j-k_i \mid 1\leq i<j\leq d \}$.

\medskip

For the initial step $l=0$, we first set $\Theta_{[0]}=\Theta$, as mentioned above.

Since $c_{-v}(\Theta)\in \GL_d(\bK)$, by Lang isogeny there exists a matrix $F\in \GL_d(\bK)$ such that $F^{-1}\cdot F^{(v)} = c_{-v}(\Theta)$ (product of inverse and $v$-th twist).
Now, let $F=L\cdot U$ be the LU-decomposition of $F$ in $\GL_d(\bK)$, where the diagonal entries of $U$ all equal $1$.
We claim that $U_{[0]}=U$ fulfills the desired property. Indeed, $\cI_0=\{ (i,j) \mid 1\leq i<j\leq d\}$ and
as the diagonal entries of $U$ are $1$, $U$ is of the form $\one_d-S_{[0]}\sigma^0$ with $S_{[0]}$ strictly upper triangular.
Further, write $\Theta_{[0]}=\Theta=\sum_{l=-v}^\infty c_l(\Theta)\sigma^l$, then
    \begin{align*}
        \Theta_{[1]} &:= U_{[0]}\Theta (U_{[0]})^{-1} = U\cdot \sum_{l=-v}^\infty c_l(\Theta)\sigma^l U^{-1} \\
        &= \sum_{l=-v}^\infty U c_l(\Theta)(U^{-1})^{(-l)}\sigma^l 
	\end{align*}
	and hence        
    \begin{align*}
        c_{-v}(\Theta_{[1]}) &= U c_{-v}(\Theta)\cdot (U^{-1})^{(v)} = U\cdot  F^{-1}\cdot F^{(v)}\cdot (U^{-1})^{(v)}\\
        &= L^{-1}\cdot L^{(v)}
    \end{align*}
    is lower triangular. This means that for all $1\leq i<j\leq d$, we have $\ordsigma((\Theta_{[1]})_{ij})>-v+0$ as desired.

\medskip

For the induction step, assume that for given $l\in \{1,\dots, k_d-k_1\}$, the matrix $\Theta_{[l]}$ has been constructed satisfying
\begin{itemize}
\item $\ordsigma(\Theta_{[l]})=-v$,
\item $c_{-v}(\Theta_{[l]})\in \GL_d(\bK)$ is a lower triangular matrix,
\item $\ordsigma((\Theta_{[l]})_{ij})>-v+\min \{ l-1, k_j-k_i\} $ for all $(i,j)$ with $1\leq i<j\leq d$. 
\end{itemize}
For $l=1$, the above constructed matrix satisfies this hypotheses.

As explained above, we now seek to construct $\Theta_{[l+1]}$ as $\Theta_{[l+1]} = U_{[l]}\Theta_{[l]}(U_{[l]})^{-1}$ where
$U_{[l]}=\one_d-S_{[l]}\sigma^l$ for an appropriate matrix $S_{[l]}\in \Mat_d(\bK)$ with $(S_{[l]})_{ij}=0$ for all $(i,j)\not\in \cI_l$.

For any such matrix $S_{[l]}$, we have
\begin{align}
\Theta_{[l+1]} &= U_{[l]}\Theta_{[l]}(U_{[l]})^{-1} \notag \\
&= \left( \one_d-S_{[l]}\sigma^l\right)\Theta_{[l]} \left( \one_d+S_{[l]}\sigma^l + (S_{[l]}\sigma^l)^2 + \dots + (S_{[l]}\sigma^l)^{d-1} \right) \label{eq:theta_l+1} \\
&\equiv \Theta_{[l]} \mod{\sigma^{-v+l-1}}\notag
\end{align}  

So the first two bullet points of the induction hypothesis are also satisfied for $\Theta_{[l+1]}$, and we already have
$\ordsigma((\Theta_{[l+1]})_{ij})>-v+\min \{ l-1, k_j-k_i\} $ for all $(i,j)$ with $1\leq i<j\leq d$. 

Hence, we are done, if we can choose the entries of $S_{[l]}$ in such a way that $c_{-v+l}(\Theta_{[l+1]})_{ij}=0$ for all $(i,j)\in \cI_l$.

By the formula \eqref{eq:theta_l+1} above, we have
\begin{align*}
c_{-v+l}(\Theta_{[l+1]}) &= c_{-v+l}(\Theta_{[l]}) - S_{[l]}\cdot c_{-v}(\Theta_{[l]})^{(-l)} + c_{-v}(\Theta_{[l]})\cdot S_{[l]}^{(v)},
\end{align*}
and hence, taking into account that $ c_{-v}(\Theta_{[l]})$ is lower triangular, we obtain for all $1\leq i<j\leq d$,
\begin{align*}
c_{-v+l}(\Theta_{[l+1]})_{ij} &= c_{-v+l}(\Theta_{[l]})_{ij} - \sum_{k=j}^d (S_{[l]})_{ik}\cdot c_{-v}(\Theta_{[l]})^{(-l)}_{kj} + \sum_{k=1}^i c_{-v}(\Theta_{[l]})_{ik}\cdot (S_{[l]})^{(v)}_{kj}\\
&= -(S_{[l]})_{ij}\cdot c_{-v}(\Theta_{[l]})^{(-l)}_{jj} + c_{-v}(\Theta_{[l]})_{ii}\cdot (S_{[l]})^{(v)}_{ij} + c_{-v+l}(\Theta_{[l]})_{ij} \\
& \qquad - \sum_{k=j+1}^d (S_{[l]})_{ik}\cdot c_{-v}(\Theta_{[l]})^{(-l)}_{kj} + \sum_{k=1}^{i-1} c_{-v}(\Theta_{[l]})_{ik}\cdot (S_{[l]})^{(v)}_{kj}.
\end{align*}
So the condition $c_{-v+l}(\Theta_{[l+1]})_{ij}=0$ is a polynomial condition on $(S_{[l]})_{ij}$ of degree $q^v$ (as $c_{-v}(\Theta_{[l]})_{ii}\ne 0$) depending on entries $(S_{[l]})_{mn}$ with $n-m>j-i$.

We can, therefore, successively solve these equations for all $(i,j)\in \cI_l$ starting with $(i,j)=(1,d)$.
\end{proof}

\begin{rem}
    The proof also works for $v<0$ (with minor adaptations), but not for $v=0$.
\end{rem}

\begin{cor}\label{cor:orders-of-H}
A matrix $H$ as given in Theorem \ref{thm:conjugation} satisfies
\[ \ordsigma(H)=-v = \ordsigma(H_{ii}) \text{ for all $i=1,\ldots, d$}. \]
\end{cor}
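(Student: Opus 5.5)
The idea is to read off the $\sigma$-orders of the entries of $H$ from the leading coefficient matrix $c_{-v}(H)$. Write $H=G^{-1}\Theta G$ with $G\in \U(\vect{k})$ as in Theorem~\ref{thm:conjugation}. I will show that $\ordsigma(H)\geq -v$, that $c_{-v}(H)$ is lower triangular and invertible, and hence that its diagonal entries are nonzero. That gives $\ordsigma(H_{ii})=-v$ for every $i$, and so $\ordsigma(H)=-v$.

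For the lower bound, the upper triangular entries are handled directly by the theorem: for $i<j$ we have $\ordsigma(H_{ij})>-v+k_j-k_i\geq -v$, so $c_{-v}(H)_{ij}=0$. The entries with $i\geq j$ are not controlled by the theorem's conclusion. For these I would instead rely on the construction in its proof: $H=\Theta_{[k_d-k_1+1]}$ is obtained from $\Theta$ by successive conjugations.

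\begin{enumerate}[(a)]
\item The first conjugation is by a constant matrix $U_{[0]}$, which preserves the order $-v$.
\item The later conjugations by $U_{[l]}=\one_d-S_{[l]}\sigma^l$ with $l\geq 1$ leave everything unchanged modulo $\sigma^{-v+l-1}$, i.e.\ the coefficient at $\sigma^{-v}$ and the vanishing of all lower coefficients are untouched.
\item The first two bullet points of the induction hypothesis are carried along explicitly: $\ordsigma(\Theta_{[l]})=-v$, and $c_{-v}(\Theta_{[l]})$ is lower triangular and invertible.
\end{enumerate}

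Applying these to $H=\Theta_{[k_d-k_1+1]}$ gives $\ordsigma(H)=-v$ and $c_{-v}(H)=L^{-1}L^{(v)}$ invertible lower triangular. Since its determinant is the product of the diagonal entries, every $c_{-v}(H)_{ii}\neq 0$, and we are done.

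The main obstacle is that the corollary says "a matrix $H$ as given in Theorem~\ref{thm:conjugation}", which could mean an arbitrary $G\in\U(\vect{k})$ satisfying the conclusion, not necessarily the one produced by the proof. To handle an arbitrary $G$, I would prove $\ordsigma(H)\geq -v$ and invertibility of $c_{-v}(H)$ directly by a weighted filtration argument.

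Set $\Delta=\diag(\sigma^{k_1},\ldots,\sigma^{k_d})$. Then $G\in\U(\vect{k})$ means exactly that $\Delta G\Delta^{-1}$ has entries $\sigma^{k_i}g_{ij}\sigma^{-k_j}$ of $\sigma$-degree $\leq 0$, with diagonal entries equal to $1$. However, $G$ may have large entries in $\tau$, so orders could a priori drop, and this route is delicate. In that case I would compare leading terms in the weighted order $w(f_{ij})=\ordsigma(f_{ij})+k_i-k_j$. Setting $\Theta'=\Delta^{-1}\Theta\Delta$ and $H'=\Delta^{-1}H\Delta$, the condition on $H$ says that the strictly upper part of $H'$ has order $>-v$. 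I would then use $\Theta G=GH$ to argue, by induction from the last column, that no term of order $<-v$ can appear in $H$.

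If this general version resists, I would state the corollary for the matrix $H$ constructed in the proof, which is all that is needed later, and give the short argument above.
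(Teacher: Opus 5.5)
Your route through the construction is correct, but it proves the corollary only for the specific $H=\Theta_{[k_d-k_1+1]}$ produced in the proof of Theorem~\ref{thm:conjugation}. The statement concerns any $H=G^{-1}\Theta G$ with $G\in\U(\vect{k})$ satisfying the theorem's conclusion. For that general case you give only an unexecuted sketch, or a weakening of the statement.

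The obstacle you cite is not real. By definition $\U(\vect{k})\subseteq\GL_d(\bK\sp{\sigma})$, so every entry of $G$ is a polynomial in $\sigma$ with nonnegative exponents. The condition $\degsigma(g_{ij})\leq k_j-k_i$ is only an upper bound on that degree. It is $\Delta G\Delta^{-1}$ that has $\tau$-entries (this is the matrix $\tilde G$ in the proof of Theorem~\ref{thm:pure-implies-almost-strictly-pure}), not $G$ itself.

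With this, the general case is immediate, and it is precisely the paper's argument. By Lemma~\ref{lem:U-is-group}, $G^{-1}\in\U(\vect{k})$ as well, so $\ordsigma(G)=\ordsigma(G^{-1})=0$ and $c_0(G)$ is unipotent upper triangular. Hence $\ordsigma(H)\geq -v$, and only the lowest-order terms contribute to the $\sigma^{-v}$-coefficient:
\[ c_{-v}(H)=c_0(G^{-1})\,c_{-v}(\Theta)\,c_0(G)^{(v)}\in\GL_d(\bK). \]
The theorem's conclusion $\ordsigma(H_{ij})>-v+k_j-k_i\geq -v$ for $i<j$ makes this matrix lower triangular. So its diagonal entries are nonzero, which gives $\ordsigma(H_{ii})=-v$ for all $i$ and $\ordsigma(H)=-v$.

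This argument needs no knowledge of how $G$ was built, so you do not have to track $L^{-1}L^{(v)}$ through the induction. It also makes both your weighted-filtration plan and your fallback of restricting the statement unnecessary.
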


\begin{proof}
 Since $\U(\vect{k})\subseteq \GL_d(\bK\sp{\sigma})$, the matrix $G$ satisfies $\ordsigma(G)=0$ and $c_0(G)\in \GL_d(\bK)$.
Hence,
\[ \ordsigma(H)=\ordsigma(G^{-1}\Theta G)\geq \ordsigma(G^{-1})+\ordsigma(\Theta)+\ordsigma(G)=-v,\]
and
\[ c_{-v}(H)=c_0(G^{-1})\cdot c_{-v}(\Theta)\cdot c_0(G^{-1})\in \GL_d(\bK).\]
In particular, $c_{-v}(H)\ne 0$ and therefore, $\ordsigma(H)=-v$.
The condition $\ordsigma(H_{ij})>-v+k_j-k_i\geq -v$ for all $(i,j)$ with $1\leq i<j\leq d$ ensures that $c_{-v}(H)$ is lower triangular.
Since $c_{-v}(H)\in \GL_d(\bK)$, this implies that the diagonal entries of $c_{-v}(H)$ are non-zero, and hence $\ordsigma(H_{ii})=-v$ for all $i=1,\ldots, d$.
\end{proof}

\section{Anderson \texorpdfstring{$t$}{t}-modules}\label{sec:anderson-t-modules}

    In this section, we recall the essential definition and terminologies of Anderson's objects. We begin with the definition of $t$-motives. 
    Let $\mot$ be a left $K\{\tau,t\}$-module. Then we say that $\mot$ is an abelian \emph{$t$-motive} over $K$ if $\mot$ is free of finite rank over both of $K[t]$ and $K\sp{\tau}$, and the quotient module $\mot/(K\{\tau\}\tau\mot)$ is a $(t-\theta)$-power-torsion module. Morphisms between abelian $t$-motives are left $K\{\tau,t\}$-module homomorphisms. 
    It was proved by Anderson \cite[Lemma~1.4.5]{ga:tm} that if a left $K\sp{\tau,t}$-module $\mot$ is finitely generated over both of $K[t]$ and $K\sp{\tau}$, then $\mot$ is free over $K[t]$ if and only if it is free over $K\sp{\tau}$,
    provided that $K$ is perfect. The category of abelian $t$-motives plays a crucial role in the present article.
    
    For $d>0$, a $d$-dimensional \emph{Anderson $t$-module} is an $\Fq$-algebra homomorphism
    \begin{align*}
        \phi:\Fq[t]&\to\Mat_d(K\{\tau\})\\
        a&\mapsto\phi_a
    \end{align*}
    such that $\partial\phi_t-\theta\mathbb{I}_d$ is a nilpotent matrix where $\partial(\sum_{i=0}^NB_i\tau^i):=B_0$ for $B_i\in\Mat_d(K)$. For each $d$-dimensional Anderson $t$-module $\phi$, we can associate a left $K\{\tau,t\}$-module $\mot_\phi:=\Mat_{1\times d}(K\{\tau\})$ with the natural multiplication by $K\{\tau\}$ from left, and the structure of $K[t]$-module is uniquely determined by
    \[
        t\cdot(x_1,\dots,x_d):=(x_1,\dots,x_d)\phi_t.
    \]
    Since $\partial\phi_t-\theta\mathbb{I}_d$ is a nilpotent matrix, we have $(t-\theta)^d(\mot_\phi/(K\{\tau\}\tau\mot_\phi))=0$. If $\mot_\phi$ defines an abelian $t$-motive, then we call the $t$-module $\phi$ abelian.
    In this case, we denote by $r:=\mathrm{rank}_{K[t]}\mot_\phi$ the rank of the abelian $t$-module $\phi$.

    Given two abelian $t$-modules $\phi$ and $\psi$ of dimension $d$ and $e$, a morphism $U:\phi\to\psi$ of $t$-modules is a matrix $U\in\Mat_{e\times d}(K\sp{\tau})$ such that $U\phi_a=\psi_aU$ for any $a\in\Fq[t]$. It induces a morphism between their $t$-motives given by the left $K\{\tau,t\}$-module homomorphism
    \begin{align*}
        U^{\dag}:\mot_\psi&\to\mot_\phi\\
        (x_1,\dots,x_e)&\mapsto(x_1,\dots,x_e)U.
    \end{align*}
    In this way, it was proved by Anderson \cite[Theorem~1]{ga:tm} that the functor $\phi\mapsto\mot_\phi$ gives the anti-equivalence between the category of abelian $t$-modules and the category of abelian $t$-motives.

    The inverse of the above functor can be described as follows. Let $\mot$ be an abelian $t$-motive with a fixed $K\{\tau\}$-basis $\{\kappa_1,\dots,\kappa_d\}\subset\mot$. Then there is a matrix $D_\kappa\in\Mat_d(K\{\tau\})$ such that
    \[
        t\begin{pmatrix}
            \kappa_1\\
            \vdots\\
            \kappa_d
        \end{pmatrix}=D_\kappa\begin{pmatrix}
            \kappa_1\\
            \vdots\\
            \kappa_d
        \end{pmatrix}.
    \]
    If we consider the $\mathbb{F}_q$-algebra homomorphism $\phi:\mathbb{F}_q[t]\to\Mat_d(K\{\tau\})$ uniquely determined by $\phi_t:=D_\kappa$, the matrix $\partial\phi_t-\theta\mathbb{I}_d$ must be a nilpotent matrix, since the quotient module $\mot/(K\{\tau\}\tau\mot)$ is a $(t-\theta)$-power-torsion module. Consequently, $\phi$ is indeed a $d$-dimensional $t$-module. 

    A different choice of $K\sp{\tau}$-basis $\{\varepsilon_1,\dots,\varepsilon_d\}\subset \mot$ induces another $t$-module $\psi:\mathbb{F}_q[t]\to\Mat_{d}(K\{\tau\})$ by setting $\psi_t=D_{\varepsilon}\in\Mat_d(K\{\tau\})$ where $D_\varepsilon$ is obtained in the same way as $D_\kappa$. 

    We can directly see that these $t$-modules are isomorphic. Indeed, note that there is a change of basis matrix $U\in\GL_d(K\{\tau\})$ such that
    \[
        \begin{pmatrix}
            \varepsilon_1\\
            \vdots\\
            \varepsilon_d
        \end{pmatrix}=U\begin{pmatrix}
            \kappa_1\\
            \vdots\\
            \kappa_d
        \end{pmatrix}.
    \]
    On the one hand,
    \[
        t\begin{pmatrix}
            \varepsilon_1\\
            \vdots\\
            \varepsilon_d
        \end{pmatrix}=\psi_t\begin{pmatrix}
            \varepsilon_1\\
            \vdots\\
            \varepsilon_d
        \end{pmatrix}=\psi_tU\begin{pmatrix}
            \kappa_1\\
            \vdots\\
            \kappa_d
        \end{pmatrix}.
    \]
    On the other hand,
    \[
        t\begin{pmatrix}
            \varepsilon_1\\
            \vdots\\
            \varepsilon_d
        \end{pmatrix}=tU\begin{pmatrix}
            \kappa_1\\
            \vdots\\
            \kappa_d
        \end{pmatrix}=Ut\begin{pmatrix}
            \kappa_1\\
            \vdots\\
            \kappa_d
        \end{pmatrix}=U\phi_t\begin{pmatrix}
            \kappa_1\\
            \vdots\\
            \kappa_d
        \end{pmatrix}.
    \]
    So, it follows that $\psi_tU=U\phi_t$, and thus $\phi$ and $\psi$ are isomorphic $t$-modules. We summarize the above observation as follows.

    \begin{lem}\label{lem:change-of-basis-is-isomorphic}
        Let $\phi$ and $\psi$ be two $d$-dimensional $t$-modules. If $\phi$ and $\psi$ correspond to the same $t$-motive $\mot$ (but with respect to different choices of $K\{\tau\}$-bases),
        then the $t$-modules $\phi$ and $\psi$ are isomorphic.
    \end{lem}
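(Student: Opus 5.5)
The statement to prove is Lemma \ref{lem:change-of-basis-is-isomorphic}: if $\phi$ and $\psi$ are $d$-dimensional $t$-modules whose $t$-motives are the same $\mot$ up to a choice of $K\sp{\tau}$-basis, then $\phi\cong\psi$. I would essentially formalize the computation given just before the statement. Saying that $\phi$ and $\psi$ correspond to the same $t$-motive $\mot$ means that $\mot$ has two $K\sp{\tau}$-bases $\vect{\kappa}$ and $\vect{\varepsilon}$ with
\[ t\vect{\kappa}=\phi_t\vect{\kappa} \quad\text{and}\quad t\vect{\varepsilon}=\psi_t\vect{\varepsilon}. \]
If instead we only know $\mot_\phi\cong\mot_\psi$ as $K\sp{\tau,t}$-modules, I would first transport the standard basis of $\mot_\psi$ along the isomorphism to reduce to this situation.

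\textbf{Step 1: the change-of-basis matrix.} Since both tuples are $K\sp{\tau}$-bases of the free module $\mot$, there is a matrix $U\in\Mat_d(K\sp{\tau})$ with $\vect{\varepsilon}=U\vect{\kappa}$. Likewise there is $V\in\Mat_d(K\sp{\tau})$ with $\vect{\kappa}=V\vect{\varepsilon}$. Substituting gives $VU\vect{\kappa}=\vect{\kappa}$ and $UV\vect{\varepsilon}=\vect{\varepsilon}$. Uniqueness of coordinates with respect to a basis then forces $VU=UV=\one_d$, so $U\in\GL_d(K\sp{\tau})$.

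\textbf{Step 2: the intertwining relation.} I would compute $t\vect{\varepsilon}$ in two ways. On one side, $t\vect{\varepsilon}=\psi_t\vect{\varepsilon}=\psi_tU\vect{\kappa}$. On the other side, $t$ commutes with $\tau$ and with $K$, hence with every entry of $U$, so $t\vect{\varepsilon}=tU\vect{\kappa}=Ut\vect{\kappa}=U\phi_t\vect{\kappa}$. Comparing coefficients in the basis $\vect{\kappa}$ gives $\psi_tU=U\phi_t$.

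\textbf{Step 3: conclusion.} Because $\phi$ and $\psi$ are $\Fq$-algebra homomorphisms determined by their value at $t$, the relation $\psi_tU=U\phi_t$ extends to $\psi_aU=U\phi_a$ for every $a\in\Fq[t]$, by multiplicativity and $\Fq$-linearity. So $U$ is a morphism $\phi\to\psi$, and $U^{-1}$ satisfies the symmetric relation, making it an inverse morphism. Hence $\phi\cong\psi$.

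There is no real obstacle in this argument. The only point needing care is the invertibility of $U$ in Step 1, which comes from uniqueness of coordinates with respect to a basis rather than from any determinant argument over the noncommutative ring $K\sp{\tau}$.
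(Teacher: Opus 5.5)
Your proof is correct and takes essentially the same approach as the paper: the change-of-basis matrix $U\in\GL_d(K\{\tau\})$ with $\vect{\varepsilon}=U\vect{\kappa}$ satisfies $\psi_tU=U\phi_t$, obtained by computing $t\vect{\varepsilon}$ in two ways. You also justify two points the paper leaves implicit, namely why $U$ is invertible and why the relation at $t$ extends to all $a\in\Fq[t]$.
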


    Following Anderson \cite[pp.~467-468]{ga:tm}, we call the abelian $t$-motive $\mot$ \emph{pure} if there exists a $\bK\ps{\frac{1}{t}}$-lattice $\tilde{\Lambda}$ in $\bK\ls{\frac{1}{t}}\otimes_{K[t]} \mot$ and positive integers $u,v$ such that
    \[
        t^u\tilde{\Lambda}=\tau^v\tilde{\Lambda}.
    \]
    Here, as before, $\bK$ denotes an algebraic closure of $K$.
    The \emph{weight} of a pure $t$-motive $\mot$ is defined by $w(\mot):=u/v$. A $d$-dimensional abelian $t$-module $\phi$ of rank $r$ is called pure if $\mot_\phi$ is a pure $t$-motive. The weight of $\phi$ is defined to be $w(\phi):=d/r$. It was proved by Anderson \cite[Lemma~1.10.1]{ga:tm} that for a pure $t$-module $\phi$, we have $w(\phi)=w(\mot_\phi)$.

    Observe that the purity and weight of $t$-motives and $t$-modules only depend on their isomorphism classes.

    In \cite{cn-mp:hpqpam}, Namoijam and Papanikolas defined the notion of strictly pure and almost strictly pure $t$-modules. Then they proved that they are always pure $t$-modules.\footnote{As mentioned in the introduction, for strictly pure $t$-modules this was already shown in \cite[Remark~5.5.5]{dg:bsffa} and \cite[Theorem~7.3.6]{dt:ffa}.}
    
    \begin{defn} (\cite[Example 3.38 \& Section 4.5]{cn-mp:hpqpam})\\ \label{defn:almost-strictly-pure-t-module}
        A $d$-dimensional $t$-module $\phi$ is called \emph{almost strictly pure} if there is a positive integer $n$ so that the leading matrix of $\phi_{t^n}$ is invertible, that is, when we express
    \[
        \phi_{t^n}=B_0+B_1\tau+\cdots+B_s\tau^s,
    \]
    with $B_s\ne 0$, then $B_s\in\GL_d(K)$. If the exponent $n$ can be chosen as $1$, then the $t$-module $\phi$ is further called \emph{strictly pure}. 
    \end{defn}
    
    By the anti-equivalence between abelian $t$-modules and abelian $t$-motives, we can transport the above notion to the $t$-motive side.

    \begin{defn}
        We call a $t$-motive $\mot$ \emph{almost strictly pure with respect to the $K\sp{\tau}$-basis $\{\kappa_1,\dots,\kappa_d\}$} if there is a positive integer $n$ such that
        \[
            t^n\begin{pmatrix}
                \kappa_1\\
                \vdots\\
                \kappa_d
            \end{pmatrix}=D\begin{pmatrix}
                \kappa_1\\
                \vdots\\
                \kappa_d
            \end{pmatrix}
        \]
        for some $D=D_0+D_1\tau+\cdots+D_s\tau^s$ with $D_s\in\GL_d(K)$. If the exponent $n$ can be chosen as $1$, then the $t$-motive $\mot$ is further called \emph{strictly pure with respect to the basis $\{\kappa_1,\dots,\kappa_d\}$}.
    \end{defn}

    As mentioned in the introduction, the strictly pure $t$-motives are almost strictly pure, and all almost strictly pure $t$-motives are pure (see \cite[Remark~4.87]{cn-mp:hpqpam}). Our main results Theorem~\ref{thm:pure-implies-almost-strictly-pure} and Corollary~\ref{cor:almost-strictly-pure-over-finite-extension} provide the criterion for the reverse direction when pure $t$-motives are almost strictly pure or even strictly pure.
    
    The following proposition is immediate from the definition.

    \begin{prop}
        Let $\phi$ be a $d$-dimensional $t$-module. Then $\phi$ is almost strictly pure (resp. strictly pure) if and only if $\mot_\phi$ is almost strictly pure (resp. strictly pure) with respect to the standard basis.
    \end{prop}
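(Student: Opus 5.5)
The statement is essentially an unwinding of definitions, so I would prove it by matching the two notions term by term, handling the direction of the isomorphism $\mot_\phi\cong K\sp{\tau}^{1\times d}$ with care. First fix the standard $K\sp{\tau}$-basis $\{e_1,\ldots,e_d\}$ of $\mot_\phi=\Mat_{1\times d}(K\sp{\tau})$, where $e_i$ is the $i$-th unit row vector. By definition of the $K[t]$-action, $t\cdot e_i=e_i\phi_t$, and $e_i\phi_t$ is the $i$-th row of $\phi_t$. Writing $(\phi_t)_{ij}\in K\sp{\tau}$, this row equals $\sum_j (\phi_t)_{ij}e_j$, since the left $K\sp{\tau}$-multiplication on row vectors gives $(\phi_t)_{ij}e_j$ as the vector with $(\phi_t)_{ij}$ in position $j$. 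Hence, in the column-vector notation $\vect{e}$, we get $t\vect{e}=\phi_t\vect{e}$, so that $D_e=\phi_t$.

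Next I would iterate this relation. Since $t$ commutes with $K\sp{\tau}$ in $K\sp{\tau,t}$, we have $t^n\vect{e}=t^{n-1}\phi_t\vect{e}=\phi_t\, t^{n-1}\vect{e}$, and by induction $t^n\vect{e}=\phi_t^n\vect{e}=\phi_{t^n}\vect{e}$, using that $\phi$ is an $\Fq$-algebra homomorphism. The matrix $D$ in the definition of almost strict purity is uniquely determined by $n$, because $\vect{e}$ is a basis. So for each $n$, the matrix $D$ for $t^n$ is exactly $\phi_{t^n}$, and its leading coefficient matrix is the leading matrix of $\phi_{t^n}$.

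Therefore the condition \textquotedblleft there is $n\geq1$ with invertible top coefficient\textquotedblright\ is literally the same condition on both sides, and taking $n=1$ gives the strictly pure case. The only point requiring care is the bookkeeping in the first step: the row-vector convention with right multiplication by $\phi_t$ must match the column-vector convention $t\vect{\kappa}=D\vect{\kappa}$ without introducing a transpose. I would verify this directly as above, and I expect no real obstacle beyond that.
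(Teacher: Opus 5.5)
Your proposal is correct and is exactly the definition-unwinding the paper has in mind; the paper gives no separate proof and simply calls the proposition immediate from the definitions. Your observations that $t^n\vect{e}=\phi_{t^n}\vect{e}$ for the standard basis, and that the matrix $D$ is uniquely determined by the basis, are precisely the details the paper leaves implicit, and your check of the row-versus-column convention is the right point to verify.
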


Given two pure $t$-motives $\mot_1$ and $\mot_2$, their \emph{tensor product} is defined to be the $K[t]$-module $\mot:=\mot_1\otimes_{K[t]}\mot_2$ with diagonal $\tau$-action, and $\mot$ is again a pure $t$-motive (see \cite[Section 1.11]{ga:tm}).
Moreover, the weight of the $\mot$ is given by $w(\mot)=w(\mot_1)+w(\mot_2)$ (see \cite[Prop.~1.11.1]{ga:tm}). 

    Accordingly, the \emph{tensor product} $\phi\otimes \psi$ of two pure $t$-modules $\phi$ and $\psi$ is defined up to isomorphism as a $t$-module that corresponds to the tensor product $\mot_\phi\otimes_{K[t]}\mot_\psi$ with a fixed choice of $K\sp{\tau}$-basis.
In particular, given two pure $t$-modules $\phi$ and $\psi$, its tensor product $\phi\otimes\psi$ is again a pure $t$-module.

\section{Main Theorems}\label{sec:main-theorem}

We are now prepared to prove the main theorems that we already cited in the introduction. We start with the result on pure $t$-motives over an algebraically closed field $\bK$.

Since the definition of purity uses $\bK\ps{t}$-lattices in $\bK\ls{\frac{1}{t}}\otimes_{K[t]} \mot$, but more suitable to our tasks are $\bK\sps{\sigma}$-lattice in $ \bK\sls{\sigma}\otimes_{K\sp{\tau}} \mot$, we need a result from \cite{am:aefam} connecting these two.

\begin{prop}\label{prop:iso-of-completions} (cf.~\cite[Proposition 7.8]{am:aefam})\\
Let $\mot$ be an abelian Anderson $t$-motive over $\bK$, and denote
\[ \mothat:=\bK\sls{\sigma}\otimes_{\bK\sp{\tau}} \mot
\quad\text{and}\quad
 \hat{\mot}^{1/t}:=\bK\ls{\tfrac{1}{t}}\otimes_{\bK[t]} \mot
.\] Then
\begin{enumerate}
    \item There is a natural isomorphism of $\bK\ls{\frac{1}{t}}\{\tau\}$-modules $\iota:\hat{\mot}^{1/t} \to \mothat$.
    \item \label{prop:iso-of-completions:item:2} If $\Lambda$ is a $\bK\sps{\sigma}$-lattice in $\mothat$ such that $t^{-1}\Lambda\subseteq \Lambda$, then $\tilde{\Lambda}:=\iota^{-1}(\Lambda)\subseteq \hat{\mot}^{1/t}$ is a $\bK\ps{\frac{1}{t}}$-lattice in $\hat{\mot}^{1/t}$.
\item \label{prop:iso-of-completions:item:3} If $\tilde{\Lambda}$ is a $\bK\ps{\frac{1}{t}}$-lattice in $\hat{\mot}^{1/t}$ such that $\sigma\tilde{\Lambda}\subseteq \tilde{\Lambda}$, then $\Lambda:=\iota(\tilde{\Lambda})\subseteq \mothat$ is a $\bK\sps{\sigma}$-lattice in $\mothat$.
\end{enumerate}
\end{prop}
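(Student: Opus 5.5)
Both completions are obtained from $\mot$ by completing with respect to compatible ``degree'' filtrations. To build $\iota$ I will use the structure of $\mot$ as a module over $\bK\sp{\tau,t}$. Choose a $\bK\sp{\tau}$-basis $\vect{\kappa}$ of $\mot$ with $t\vect{\kappa}=D\vect{\kappa}$, $D\in\Mat_d(\bK\sp{\tau})$. I will show that $t$ acts invertibly on $\mothat$ and that $t^{-1}$ is topologically nilpotent there. This rests on the abelian $t$-motive condition: $t$ is $\bK\sp{\tau}$-linear and injective with finite cokernel, since $\mot/t\mot$ is finite-dimensional over $\bK$. Hence $D$ becomes invertible in $\Mat_d(\bK\sls{\sigma})$, because $\bK\sls{\sigma}$ is a skew field and $D$ has full rank.

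The key quantitative step is to find a $\bK\sps{\sigma}$-lattice $\Lambda_0$ with $t^{-1}\Lambda_0\subseteq\sigma\Lambda_0$, or at least $t^{-N}\Lambda_0\subseteq\sigma\Lambda_0$. For this I will apply Theorem~\ref{thm:matrix-decomposition} to $C=D^{-1}$, write $C=A\cdot\diag(\sigma^{k_i})\cdot B$, and use the fact that the $\bK\sp{\tau}$-structure bounds the $\sigma$-orders. Concretely, $\degtau$ of elements of $t^n\mot$ grows linearly in $n$, because $\mot$ is finitely generated over $\bK[t]$ and $\bK\sp{\tau}$ with ranks $r$ and $d$. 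So $t^{-1}$ shifts $\sigma$-valuations positively on a suitable lattice.

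With this contraction, the $\bK[t]$-action on $\mothat$ extends continuously to $\bK\ls{\frac1t}$, making $\mothat$ a $\bK\ls{\frac1t}\sp{\tau}$-module. The natural map $\hat{\mot}^{1/t}\to\mothat$ then exists. To show it is an isomorphism, I compare a $\bK[t]$-basis with a $\bK\sp{\tau}$-basis. Both completions are the completion of $\mot$ for the same topology: I will show the filtrations $\{t^{-n}\cdot(\text{span})\}$ and $\{\sigma^n\Lambda_0\}$ are cofinal in each other. This yields (1), with dimension count $r$ over $\bK\ls{\frac1t}$.

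Items (2) and (3) then follow formally. Suppose $\Lambda$ is a $\bK\sps{\sigma}$-lattice with $t^{-1}\Lambda\subseteq\Lambda$. Then $\Lambda$ is a $\bK\ps{\frac1t}$-module, since it is closed under $t^{-1}$ and complete. It is bounded and open in the common topology, hence finitely generated and spanning, and so it is a $\bK\ps{\frac1t}$-lattice. Statement (3) is the symmetric argument with the roles of $\sigma$ and $1/t$ exchanged.

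I expect the main obstacle to be establishing the contraction estimate, i.e., that $t^{-1}$ is topologically nilpotent on $\mothat$ and that the two topologies agree. This is where the abelian (finite rank over $\bK[t]$) hypothesis must be used genuinely. I would first attempt it via degree counting in $\mot$: $\dim_\bK$ of $\{\degtau\le n\}$ grows like $dn$, while $t$-degree pieces grow like $rn$.
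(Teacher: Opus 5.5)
The paper gives no proof of this proposition; it quotes it from \cite[Proposition 7.8]{am:aefam}. Judged on its own, your outline has a genuine gap exactly at the step you flag, and the tools you propose cannot close it. Everything rests on $t^{-1}$ being topologically nilpotent on $\mothat$, i.e.\ $\ordsigma(D^{-n})\to\infty$. Equivalently, $t$ must have only strictly expanding $\sigma$-adic slopes on $\mothat$.

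Counting dimensions gives you only two things:
\begin{enumerate}
\item $\dim_{\bK}\mot/t\mot=r$. This says that the Dieudonn\'e determinant of $D$ has $\sigma$-order $-r$, which is the \emph{sum} of the slopes, not each of them.
\item By comparing the $\tau$-degree and $t$-degree filtrations of $\mot$, an \emph{absolute} bound of the form $\min\{\degtau(x) : 0\neq x\in t^n\mot\}\geq (n-b)/a$.
\end{enumerate}
What you would need is the uniform \emph{relative} bound $\degtau(t^n m)\geq \degtau(m)+cn-C$ for all $m\in\mot$. That bound is equivalent to the contraction itself, not a consequence of counting, and neither fact above rules out a nonzero $t$-stable summand of $\mothat$ on which $t$ is not expanding. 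Likewise, a factorization $D^{-1}=A\cdot\diag(\sigma^{k_i})\cdot B$ from Theorem~\ref{thm:matrix-decomposition} only records the position of the single lattice $t^{-1}\Lambda_0$ relative to $\mot$. It says nothing about the iterates $D^{-n}$.

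The missing idea is structural. For example, decompose $\mothat$ as a module over $\bK\sls{\sigma}[t]$ ($t$ central) according to the Newton slopes of $t$, and let $W$ be the summand where $t$ is not expanding. The projection of $\mot$ to $W$ is generated over $\bK[t]$ by finitely many vectors, so it lies in a $t$-stable lattice and is bounded. Yet it is $\tau$-stable, and $\tau$ lowers $\ordsigma$ by $1$, so it is $0$. Since it spans $W$, we get $W=0$. This is where finite generation over $\bK[t]$ genuinely enters.

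Even granting the contraction, your argument for (1) is incomplete. The claim that ``both completions are the completion of $\mot$ for the same topology'' is not accurate, because $\mot$ is dense in neither space. With $\Lambda_0=\bK\sps{\sigma}\vect{\kappa}$ one has $\mothat=\mot\oplus\sigma\Lambda_0$ as $\bK$-vector spaces, so you must compare lattices in the ambient spaces. Moreover, that $\mothat$ has dimension $r$ over $\bK\ls{\frac{1}{t}}$ and that $\iota$ is injective are asserted, not proved.

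One way to supply these:
\begin{enumerate}
\item For any $\bK\sps{\sigma}$-lattice $\Lambda$ with $t^{-1}\Lambda\subseteq\Lambda$, an index computation relative to $\mot$ gives $\dim_{\bK}\Lambda/t^{-1}\Lambda=\dim_{\bK}\mot/t\mot=r$.
\item Successive approximation, using the contraction, then shows that $\Lambda$ is free of rank $r$ over $\bK\ps{\frac{1}{t}}$.
\item Since each $t^{-n}\Lambda$ is again a $\bK\sps{\sigma}$-lattice, the $\sigma$-adic and $\frac{1}{t}$-adic topologies agree.
\item $\iota$ is then bijective by density of $\bK\sp{\tau,\sigma}\cdot\mot$ together with a dimension count.
\end{enumerate}
Alternatively, prove the symmetric contraction of $\sigma$ on $\hat{\mot}^{1/t}$, using finite generation over $\bK\sp{\tau}$, and build an inverse map. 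With either ingredient in place, your deductions of (2) and (3) go through.
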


From this proposition, we deduce the following.

\begin{cor}\label{cor:stable-lambda}
    Let $\mot$ be a pure Anderson $t$-motive over $\bK$ of $\bK\sp{\tau}$-rank $d$ and $\bK[t]$-rank $r$. Then there exist $u,v\geq 0$ and a $\bK\sps{\sigma}$-lattice $\Lambda$ in $\mothat=\bK\sls{\sigma}\otimes_{\bK\sp{\tau}} \mot$ such that
    \[  t^u \Lambda=\tau^v \Lambda\quad \text{ and }\quad \frac{u}{v}=\frac{d}{r}.\]
\end{cor}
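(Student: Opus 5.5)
Since $\mot$ is pure, the definition (with $\bK$) gives a $\bK\ps{\frac1t}$-lattice $\tilde{\Lambda}_0\subseteq \hat{\mot}^{1/t}$ and positive integers $u_0,v_0$ with $t^{u_0}\tilde{\Lambda}_0=\tau^{v_0}\tilde{\Lambda}_0$. The plan is to modify $\tilde{\Lambda}_0$ until it is stable under $\sigma$, so that Proposition \ref{prop:iso-of-completions}\eqref{prop:iso-of-completions:item:3} turns it into a $\bK\sps{\sigma}$-lattice. Then we transport the relation $t^u\Lambda=\tau^v\Lambda$ through the natural isomorphism $\iota$, which is $\bK\ls{\frac1t}\{\tau\}$-linear and hence commutes with both $t$ and $\tau$. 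Finally we identify $u/v$ with $d/r$ via Anderson's Lemma 1.10.1.

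\textbf{Step 1 (a $\sigma$-stable lattice).} Write $u=u_0$, $v=v_0$, so $\sigma^{v}=\tau^{-v}$ acts on $\hat{\mot}^{1/t}$. The relation gives $\tau^{-v}\tilde\Lambda_0=t^{-u}\tilde\Lambda_0\subseteq\tilde\Lambda_0$, so $\sigma^v\tilde\Lambda_0\subseteq\tilde\Lambda_0$. Set
\[\tilde\Lambda:=\sum_{i=0}^{v-1}\sigma^i\tilde\Lambda_0 .\]
Each $\sigma^i\tilde\Lambda_0$ is a $\bK\ps{\frac1t}$-lattice, since $\sigma$ is a bijective $\bK\ps{\frac1t}$-semilinear map for the inverse Frobenius on coefficients and $\bK$ is perfect. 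A finite sum of lattices is a lattice, so $\tilde\Lambda$ is a lattice. Moreover $\sigma\tilde\Lambda\subseteq\tilde\Lambda$, because $\sigma^{v}\tilde\Lambda_0\subseteq\tilde\Lambda_0$. Since $t$ commutes with $\tau$, we get
\[ t^u\tilde\Lambda=\sum_i\sigma^i t^u\tilde\Lambda_0=\sum_i\sigma^i\tau^v\tilde\Lambda_0=\tau^v\tilde\Lambda .\]

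\textbf{Step 2 (transport).} Put $\Lambda:=\iota(\tilde\Lambda)$. By Proposition \ref{prop:iso-of-completions}\eqref{prop:iso-of-completions:item:3}, $\Lambda$ is a $\bK\sps{\sigma}$-lattice in $\mothat$. Because $\iota$ is compatible with $t$ and $\tau$, we obtain $t^u\Lambda=\tau^v\Lambda$.

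\textbf{Step 3 (the ratio).} By definition $u/v=w(\mot)$. Choose a $\bK\sp{\tau}$-basis of $\mot$; this gives a $t$-module $\phi$ of dimension $d$ and rank $r$ with $\mot_\phi\cong\mot$. Anderson's \cite[Lemma~1.10.1]{ga:tm} then gives $w(\mot)=d/r$. Alternatively, this can be read off directly. In $\mothat$, $\sigma$-colength of $\tau^v\Lambda$ versus $\Lambda$ is $vd$. In $\hat{\mot}^{1/t}$, $\frac1t$-colength of $t^u\tilde\Lambda$ versus $\tilde\Lambda$ is $ur$. Comparing $\bK$-dimensions of $\tau^v\Lambda/\Lambda$ for $\bK\sps{\sigma}$-lattices and $\bK\ps{\frac1t}$-lattices gives $vd=ur$.

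The only delicate point is Step 1, namely making sure the sum of $\sigma$-translates is still a $\bK\ps{\frac1t}$-lattice. This uses that $\sigma$ is semilinear and bijective on $\hat{\mot}^{1/t}$, which holds since $\bK$ is algebraically closed, hence perfect.
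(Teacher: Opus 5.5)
Your proof is correct and follows the paper's argument essentially line by line. You sum the translates $\sigma^i\tilde\Lambda_0$ for $0\le i\le v-1$ using $\sigma^v\tilde\Lambda_0=t^{-u}\tilde\Lambda_0\subseteq\tilde\Lambda_0$, apply Proposition \ref{prop:iso-of-completions}\eqref{prop:iso-of-completions:item:3}, transport through $\iota$, and get $u/v=d/r$ from Anderson's Lemma 1.10.1, exactly as the paper does. Your optional colength check is also valid: comparing $\dim_{\bK}\tau^v\Lambda/\Lambda=vd$ with $\dim_{\bK}t^u\tilde\Lambda/\tilde\Lambda=ur$ through the $\bK$-linear isomorphism $\iota$ gives $u/v=d/r$ without relying on that citation.
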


Although the proof can be extracted from the last paragraph in the proof of \cite[Theorem 7.2]{am:aefam}, we give the proof here for the convenience of the reader. 

\begin{proof}  
Since $\mot$ is pure, there exist positive numbers $u,v\in \ZZ$, and a $\bK\ps{\frac{1}{t}}$-lattice $\Lambda'$ in $\bK\ls{\frac{1}{t}}\otimes_{K[t]} \mot$ such that
\[ t^u\Lambda' =\tau^v \Lambda', \]
and by \cite[Lemma 1.10.1]{ga:tm}, we have $\frac{u}{v}=\frac{d}{r}$.

Then $\tilde{\Lambda}:=\sum\limits_{i=0}^{v-1} \sigma^i\Lambda'$ is another $\bK\ps{\frac{1}{t}}$-lattice that satisfies $t^u\tilde{\Lambda}=\tau^v\tilde{\Lambda}$. It additionally satisfies
$\sigma\tilde{\Lambda}\subseteq \tilde{\Lambda}$, since
\[ \sigma^v\Lambda'=t^{-u}\Lambda'\subseteq \Lambda'\subseteq \tilde{\Lambda}.\]
Hence by Proposition \ref{prop:iso-of-completions}\eqref{prop:iso-of-completions:item:3}, $\Lambda:=\iota(\tilde{\Lambda})\subseteq \mothat$ is a $\bK\sps{\sigma}$-lattice in $\mothat$. Since the isomorphism $\iota:K\ls{\frac{1}{t}}\otimes_{K[t]} \mot \to \mothat$ is compatible with the $t$-action and the $\tau$-action, this lattice $\Lambda$ satisfies $t^u\Lambda=\tau^v\Lambda$.
\end{proof}

\begin{thm}\label{thm:pure-implies-almost-strictly-pure}
Let $\mot$ be a pure Anderson $t$-motive over $\bK$ of weight $w\in\mathbb{Q}$.
Then there exists a $\bK\sp{\tau}$-basis $\basis{\tilde{\kappa}}{d}$ of $\mot$, so that $\mot$ is almost strictly pure with respect to that basis. In addition, the $\bK\{\tau\}$-basis can be chosen to which $\mot$ is strictly pure, if and only if the reciprocal of the weight of $\mot$ is an integer.
\end{thm}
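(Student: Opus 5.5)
The plan is to move purity from the $\frac1t$-adic side to the $\sigma$-adic side, then use the two structural results of Section~\ref{sec:matrices} to pick a $\bK\sp{\tau}$-basis adapted to the stable lattice.

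\textbf{Step 1: compare the two bases.} By Corollary~\ref{cor:stable-lambda} there are $u,v>0$ with $\frac uv=w=\frac dr$ and a $\bK\sps{\sigma}$-lattice $\Lambda\subseteq\mothat$ with $t^u\Lambda=\tau^v\Lambda=\sigma^{-v}\Lambda$. Fix a $\bK\sps{\sigma}$-basis $\vect{\lambda}$ of $\Lambda$ and a $\bK\sp{\tau}$-basis $\vect{\kappa}$ of $\mot$. Both are $\bK\sls{\sigma}$-bases of $\mothat$, so $\vect{\lambda}=C\vect{\kappa}$ with $C\in\GL_d(\bK\sls{\sigma})$. Theorem~\ref{thm:matrix-decomposition} gives $C=ADB$. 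Replacing $\vect{\lambda}$ by $A^{-1}\vect{\lambda}$ (still a basis of $\Lambda$) and $\vect{\kappa}$ by $B\vect{\kappa}$ (still a $\bK\sp{\tau}$-basis of $\mot$), we get $\vect{\lambda}=D\vect{\kappa}$ with $D=\diag(\sigma^{k_1},\ldots,\sigma^{k_d})$.

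\textbf{Step 2: conjugate to a normal form.} Write $t^u\vect{\kappa}=\Phi\vect{\kappa}$ with $\Phi\in\Mat_d(\bK\sp{\tau})$, and set $\Theta:=D\Phi D^{-1}$, so that $t^u\vect{\lambda}=\Theta\vect{\lambda}$. The identity $t^u\Lambda=\sigma^{-v}\Lambda$ says exactly that $\Theta\in\sigma^{-v}\GL_d(\bK\sps{\sigma})$. Hence $\ordsigma(\Theta)=-v<0$ and $c_{-v}(\Theta)\in\GL_d(\bK)$, which are the hypotheses of Theorem~\ref{thm:conjugation}. That theorem yields $G\in\U(\vect{k})$ with $H=G^{-1}\Theta G$ satisfying $\ordsigma(H_{ij})>-v+k_j-k_i$ for $i<j$. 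By Corollary~\ref{cor:orders-of-H} we also have $\ordsigma(H)=-v=\ordsigma(H_{ii})$.

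\textbf{Step 3: the new basis.} Set $\tilde{\vect{\kappa}}:=D^{-1}G^{-1}D\,\vect{\kappa}$. Then $t^u\tilde{\vect{\kappa}}=D^{-1}HD\,\tilde{\vect{\kappa}}$, so the new matrix is $\Phi'=D^{-1}HD$. Two things must be checked.
\begin{enumerate}
\item The change-of-basis matrix $D^{-1}G^{\pm1}D$ lies in $\GL_d(\bK\sp{\tau})$. Its entries are $\sigma^{\mp k_i}g_{ij}\sigma^{\pm k_j}$. The defining bound $\degsigma(g_{ij})\le k_j-k_i$ of $\U(\vect{k})$ is designed to make their $\sigma$-degree $\le0$. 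Because this depends on the sign convention, I will use the freedom in Theorem~\ref{thm:matrix-decomposition} to take the $k_i$ in whichever monotone order makes the twisted entries polynomial in $\tau$. If necessary I will replace $D$ by $D^{-1}$ through the roles of $\vect{\lambda}$ and $\vect{\kappa}$.
\item The leading coefficient of $\Phi'$ is invertible. Since $\Phi'=D^{-1}HD$ is automatically in $\Mat_d(\bK\sp{\tau})$, it suffices to show $\ordsigma(\Phi'_{ij})\ge-v$ for all $i,j$, with strict inequality for $i<j$, and equality on the diagonal. Then $c_{-v}(\Phi')$ is lower triangular with nonzero diagonal, so $\Phi'$ has $\tau$-degree $v$ and invertible top coefficient, and $\mot$ is almost strictly pure with respect to $\tilde{\vect{\kappa}}$ with $n=u$.
\end{enumerate}
The diagonal case is immediate from Corollary~\ref{cor:orders-of-H}. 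Above the diagonal, the shift by $\pm(k_j-k_i)$ coming from $D$ should be exactly compensated by the bound in Theorem~\ref{thm:conjugation}. Below the diagonal, the shift has the favourable sign, so $\ordsigma(H_{ij})\ge-v$ suffices. This sign bookkeeping is the main obstacle: the ordering of the $k_i$ and the orientation of the conjugation must be chosen consistently so that both the polynomiality in (1) and the order bounds in (2) hold at once. I would first fix conventions using the $2\times2$ case.

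\textbf{The strictly pure criterion.} If $\frac1w=\frac rd\in\ZZ$, we may take $u=1$ and $v=\frac rd$ in Corollary~\ref{cor:stable-lambda}; the construction above then gives $n=1$, i.e.\ strict purity. Conversely, suppose $t\vect{\kappa}=D\vect{\kappa}$ with $D=D_0+\dots+D_s\tau^s$ and $D_s$ invertible. Then $D\in\sigma^{-s}\GL_d(\bK\sps{\sigma})$, so the $\bK\sps{\sigma}$-span $\Lambda$ of $\vect{\kappa}$ satisfies $t\Lambda=\tau^s\Lambda$. By Proposition~\ref{prop:iso-of-completions} (or directly, using that the weight is well defined by \cite[Lemma~1.10.1]{ga:tm}), $w=\frac1s$, so $\frac1w=s\in\ZZ$.
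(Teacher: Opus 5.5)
Your route is the paper's: Corollary~\ref{cor:stable-lambda}, then the factorization of Theorem~\ref{thm:matrix-decomposition}, then Theorem~\ref{thm:conjugation}, then conjugating back by the diagonal matrix. However, the sign bookkeeping you left open fails in the normalization you actually wrote down, and your suggested remedies do not repair it.

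You set $D=\diag(\sigma^{k_1},\ldots,\sigma^{k_d})$ and take $G\in\U(\vect{k})$ for the same $\vect{k}$, which forces $k_1\le\cdots\le k_d$. For $i<j$, the entry $\sigma^{-k_i}g'_{ij}\sigma^{k_j}$ of $D^{-1}G^{-1}D$ involves only $\sigma$-exponents $\ge k_j-k_i$, since $\ordsigma(g'_{ij})\ge 0$. So it lies in $\bK\sp{\tau}$ only if it vanishes whenever $k_i<k_j$. Theorem~\ref{thm:conjugation} does not give you that; already the constant LU-factor $U_{[0]}$ in its proof generally has nonzero entries there. Likewise, below the diagonal you only get $\ordsigma(\Phi'_{ij})\ge -v-(k_i-k_j)$, which can drop below $-v$. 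So your claim that the shift there ``has the favourable sign'' is false in your normalization.

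Merely picking the other monotone order does not help, because Theorem~\ref{thm:conjugation} only accepts non-decreasing vectors. Interchanging the roles of $\vect{\lambda}$ and $\vect{\kappa}$ is not available either, because the factorization is asymmetric: $\bK\sps{\sigma}$ acts on the left and $\bK\sp{\tau}$ on the right.

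The fix is to negate the exponent vector. Choose $k_1\ge\cdots\ge k_d$ in Theorem~\ref{thm:matrix-decomposition} and apply Theorem~\ref{thm:conjugation} to $\vect{m}:=-\vect{k}$. Equivalently, as in the paper, write $D=\diag(\tau^{m_1},\ldots,\tau^{m_d})$ with $m_1\le\cdots\le m_d$ and take $G\in\U(\vect{m})$. Then:
\begin{itemize}
\item the entries $\sigma^{m_i}g_{ij}\tau^{m_j}$ of $D^{-1}G^{\pm1}D$ satisfy $\degsigma\le m_i+(m_j-m_i)-m_j=0$;
\item $\ordsigma(\Phi'_{ij})=m_i-m_j+\ordsigma(H_{ij})$ is $>-v$ for $i<j$ (exact compensation), $=-v$ for $i=j$, and $\ge -v$ for $i>j$ because $m_i\ge m_j$.
\end{itemize}
So your conditions (1) and (2) hold simultaneously, and the argument closes exactly as in the paper.

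Two smaller points.
\begin{itemize}
\item Your claim that one may take $u=1$ when $\frac1w=\ell\in\ZZ$ needs justification. The paper cites \cite[Proposition~2.3.11]{uh-akj:pthshcff}. Alternatively, since $v=\ell u$, the lattice $\Lambda':=\sum_{i=0}^{u-1}(t\sigma^{\ell})^i\Lambda$ satisfies $t\sigma^{\ell}\Lambda'=\Lambda'$, i.e.\ $t\Lambda'=\tau^{\ell}\Lambda'$, by the same averaging trick as in the proof of Corollary~\ref{cor:stable-lambda}.
\item Your converse, via Proposition~\ref{prop:iso-of-completions}\eqref{prop:iso-of-completions:item:2} and Anderson's lemma on the weight, is a valid self-contained replacement for the paper's appeal to \cite[Remark~4.87]{cn-mp:hpqpam}. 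The hypothesis of that item holds because $t^{-1}\Lambda=\sigma^{s}\Lambda\subseteq\Lambda$.
\end{itemize}
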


\begin{proof}
Let $\mot$ be a pure Anderson $t$-motive over $\bK$ of $\bK\sp{\tau}$-rank $d$ and $\bK[t]$-rank $r$.
By Corollary \ref{cor:stable-lambda}, there exist positive numbers $u,v\in \ZZ$, and a $\bK\sps{\sigma}$-lattice $\Lambda$ in $\mothat= \bK\sls{\sigma}\otimes_{K\sp{\tau}} \mot$ satisfying $t^u\Lambda=\tau^v\Lambda$, and $\frac{u}{v}=\frac{d}{r}$. In addition, if $\frac{1}{w}\in\mathbb{Z}$, then by \cite[Proposition~2.3.11]{uh-akj:pthshcff}, we can choose $\Lambda$ with $u=1$ and $v=w$.

Let $\vect{b}\in\Mat_{d\times 1}(\Lambda)$ be any $\bK\sps{\sigma}$-basis of $\Lambda$ and $\vect{\kappa}\in\Mat_{d\times 1}(\mot)$ be any $\bK\sp{\tau}$-basis of $\mot$.
As $\vect{b}$ and $\vect{\kappa}$ are both $\bK\sls{\sigma}$-bases of $\mothat$, there is a matrix $C\in \GL_d(\bK\sls{\sigma})$ such that
\[  \vect{b}=C\cdot \vect{\kappa}. \]
By Theorem \ref{thm:matrix-decomposition}, there exist $A\in \GL_d(\bK\sps{\sigma})$, $B\in \GL_d(\bK\sp{\tau})$, and a diagonal matrix $D$ with $\sigma$-powers on the diagonal such that
    \[ C= A\cdot D \cdot B ,\] 
and we can even achieve that
\[ D = \Diagonal{\sigma^{-k_1}}{\sigma^{-k_d}} = \Diagonal{\tau^{k_1}}{\tau^{k_d}} \]
with $k_1\leq k_2\leq \dots \leq k_d$.
    
Then $\vect{b}':=A^{-1}\vect{b}$ is another $\bK\sps{\sigma}$-basis of the lattice $\Lambda$, and $\vect{\kappa}':=B\vect{\kappa}$ is another $\bK\sp{\tau}$-basis of the $t$-motive $\mot$, and we have $\vect{b}'=D\cdot \vect{\kappa}'$.

\medskip

Let $\Theta\in \GL_d(\bK\sls{\sigma})$ be such that $t^u\cdot \vect{b}'=\Theta \vect{b}'$.
Since $\vect{b}'$ is a basis of $\Lambda$, and $\sigma^{v}t^u\Lambda=\Lambda$, we have $\sigma^{v}\Theta\in \GL_d(\bK\sps{\sigma})$. Therefore,
$\ordsigma(\Theta)=-v$ and $c_{-v}(\Theta)\in \GL_d(\bK)$.

By Theorem \ref{thm:conjugation}, there is a matrix $G\in \U(\vect{k})$, where $\vect{k}=(k_1,\ldots, k_d)$ such that the matrix
$H:=G\Theta G^{-1}$ satisfies the condition
\[  \ordsigma(H_{ij})>-v+k_j-k_i \quad \forall 1\leq i <j\leq d. \]
By Corollary \ref{cor:orders-of-H}, $H$ also satisfies
\[ \ordsigma(H)=-v = \ordsigma(H_{ii}) \text{ for all $i=1,\ldots, d$}. \]

Now, let 
\[ \tilde{G}:=D^{-1}GD\quad \text{and} \quad \tilde{H}:=D^{-1}HD. \]

For the entries of $\tilde{G}$, we obtain $\tilde{G}_{ij}=\sigma^{k_i}G_{ij}\tau^{k_j}$ for all $1\leq i,j\leq d$. 
Since $G$ is unipotent upper triangular, this also holds for $\tilde{G}$. Further, since $G\in \U(\vect{k})$, for all $1\leq i<j\leq d$,
\[   \degsigma(\tilde{G}_{ij}) = k_i+\degsigma(G_{ij})-k_j\leq k_i+(k_j-k_i)-k_j=0. \]
Hence, the entries of $\tilde{G}$ above the diagonal are in $\bK\sp{\tau}$. In particular, $\tilde{G}\in \GL_d(\bK\sp{\tau})$.

For $\tilde{H}$, we similarly have $\tilde{H}_{ij}=\sigma^{k_i}H_{ij}\tau^{k_j}$ for all $1\leq i,j\leq d$, and therefore,

\begin{align*}
    \ordsigma(\tilde{H}_{ij}) &= 
    \partdef{ k_i+\ordsigma(H_{ij})-k_j >  k_i+(-v+k_j-k_i)-k_j= -v & \text{for }i<j, \\
    \ordsigma(H_{ii}) = -v & \text{for }i=j, \\
    k_i+\ordsigma(H_{ij})-k_j \geq k_i-v-k_j\geq -v & \text{for }i>j.    
    }
\end{align*} 
Hence,
\begin{equation}\label{eq:order-of-tildeH}
    \ordsigma(\tilde{H})=-v\,\,\text{ and }\,\, c_{-v}(\tilde{H})\in \GL_d(\bK)
\end{equation}  
($c_{-v}(\tilde{H})$ is even lower triangular).

Since $\tilde{G}\in \GL_d(\bK\sp{\tau})$, $\tilde{\vect{\kappa}}:=\tilde{G}\vect{\kappa}'$ is another $\bK\sp{\tau}$-basis of $\mot$, and we have
\begin{align*}
t^u \tilde{\vect{\kappa}} &= t^u \tilde{G}D^{-1}\vect{b}'  = \tilde{G}D^{-1} \cdot t^u \vect{b}' \\
 &= \tilde{G}D^{-1} \Theta \vect{b}' = \tilde{G}D^{-1} \Theta D \tilde{G}^{-1} \tilde{\vect{\kappa}}\\
 &=  D^{-1}GD D^{-1} \Theta D D^{-1}G^{-1}D\tilde{\vect{\kappa}} = D^{-1} H D \tilde{\vect{\kappa}}\\
 &= \tilde{H}\tilde{\vect{\kappa}}
\end{align*}

This implies that in fact $\tilde{H}\in \Mat_d(\bK\sp{\tau})$ (since $t^u\tilde{\vect{\kappa}}$ consists of elements of $\mot$), and 
the conditions \eqref{eq:order-of-tildeH} mean that $\degtau(\tilde{H})=v$, and that its top coefficient matrix is invertible.

Hence, $\mot$ is almost strictly pure with respect to the basis $\tilde{\vect{\kappa}}$. For the case of $\frac{1}{w}\in\mathbb{Z}$, since we can choose $\Lambda$ with $u=1$ and $v=w$, $\mot$ is strictly pure with respect to the basis $\tilde{\vect{\kappa}}$. Finally, by \cite[Remark~4.87]{cn-mp:hpqpam}, if $\mot$ is strictly pure, then the weight of $\mot$ is of the form $\frac{1}{\ell}$ for some $\ell\in\mathbb{Z}$. In other word, the reciprocal of the weight of $\mot$ is an integer. This completes the proof.
\end{proof}

From Theorem \ref{thm:pure-implies-almost-strictly-pure}, we easily get a result for arbitrary fields $K$ containing $\Fq$.

\begin{cor}\label{cor:almost-strictly-pure-over-finite-extension}
 Let $\mot$ be a pure Anderson $t$-motive over $K$ of $K\sp{\tau}$-rank $d$ and weight $w\in\mathbb{Q}$.
Then there exists a finite field extension $L$ of $K$, and an 
$L\sp{\tau}$-basis $\basis{\tilde{\kappa}}{d}$ of $\mot_L:=L\sp{\tau}\otimes_{K\sp{\tau}}\mot$, so that $\mot_L$ is almost strictly pure with respect to $\basis{\tilde{\kappa}}{d}$. Moreover, the $L\{\tau\}$-basis can be chosen to which $\mot$ is strictly pure if and only if $\frac{1}{w}\in\mathbb{Z}$.
\end{cor}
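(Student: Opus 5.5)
The plan is to descend Theorem~\ref{thm:pure-implies-almost-strictly-pure} from $\bK$ to a finite extension by a finiteness argument. First, base change $\mot$ to $\bK$: the module $\mot_{\bK}:=\bK\sp{\tau}\otimes_{K\sp{\tau}}\mot$ is again an abelian Anderson $t$-motive. Its $\bK\sp{\tau}$-rank is $d$ and its $\bK[t]$-rank is unchanged. It is pure of the same weight $w$, because the definition of purity for $\mot$ is already phrased via lattices in $\bK\ls{\frac1t}\otimes_{K[t]}\mot$, which equals $\bK\ls{\frac1t}\otimes_{\bK[t]}\mot_{\bK}$. Fix a $K\sp{\tau}$-basis $\vect{\kappa}$ of $\mot$ with $t\vect{\kappa}=D_\kappa\vect{\kappa}$, $D_\kappa\in\Mat_d(K\sp{\tau})$; then $\vect{\kappa}$ is also a $\bK\sp{\tau}$-basis of $\mot_{\bK}$.

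Next apply Theorem~\ref{thm:pure-implies-almost-strictly-pure} to $\mot_{\bK}$. This gives a basis $\tilde{\vect{\kappa}}=P\vect{\kappa}$ with $P\in\GL_d(\bK\sp{\tau})$ and some $n\ge1$ such that $t^n\tilde{\vect{\kappa}}=\tilde H\tilde{\vect{\kappa}}$, where the top coefficient of $\tilde H$ is invertible. If $\frac1w\in\ZZ$, we may take $n=1$. Concretely, $\tilde H=P D_\kappa^n P^{-1}$.

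The key observation is that $P$ and $P^{-1}$ are matrices with finitely many entries in $\bK\sp{\tau}$, so together they involve only finitely many coefficients in $\bK$, all algebraic over $K$. Let $L$ be the field generated over $K$ by these coefficients; it is a finite extension of $K$. Then $P\in\GL_d(L\sp{\tau})$, since $P^{-1}$ also has entries in $L\sp{\tau}$. Hence $\tilde{\vect{\kappa}}$ is an $L\sp{\tau}$-basis of $\mot_L$, and $\tilde H=PD_\kappa^nP^{-1}\in\Mat_d(L\sp{\tau})$. The top coefficient matrix of $\tilde H$ has entries in $L$ and is invertible over $\bK$, so its determinant is a nonzero element of $L$. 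Therefore it lies in $\GL_d(L)$, and $\mot_L$ is almost strictly pure with respect to $\tilde{\vect{\kappa}}$; it is strictly pure with respect to $\tilde{\vect{\kappa}}$ when $\frac1w\in\ZZ$.

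For the converse direction of the last statement, suppose $\mot_L$ is strictly pure with respect to some basis. Base changing to $\bK$, $\mot_{\bK}$ is strictly pure with respect to the same basis, because an invertible top coefficient over $L$ stays invertible over $\bK$. Then \cite[Remark~4.87]{cn-mp:hpqpam}, exactly as at the end of the proof of Theorem~\ref{thm:pure-implies-almost-strictly-pure}, forces $\frac1w\in\ZZ$, since the weight is unchanged by base extension. The only point needing care is the compatibility of purity and weight with base change. I expect this to be harmless given the definition via $\bK$-lattices and the formula $w=d/r$ from \cite[Lemma~1.10.1]{ga:tm}, which involves only ranks that are preserved. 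The finiteness of $L$ is the main step, and it relies solely on the fact that skew polynomials have finitely many coefficients.
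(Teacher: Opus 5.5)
Your proposal is correct and follows essentially the same route as the paper: base change to $\bK$, apply Theorem~\ref{thm:pure-implies-almost-strictly-pure}, and let $L$ be the finite extension generated by the finitely many coefficients of the change-of-basis matrix and its inverse. You are somewhat more explicit than the paper in two places: you check that the top coefficient matrix actually lies in $\GL_d(L)$, and you treat the ``only if'' direction separately via \cite[Remark~4.87]{cn-mp:hpqpam}; neither changes the argument.
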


\begin{proof}
Let $\basis{\kappa}{d}$ be any $K\sp{\tau}$-basis of $\mot$.

  Let $\bK$ be an algebraic closure of $K$ and consider the base extension $\mot_{\bK}:=\bK\otimes_K \mot$ with diagonal $\tau$-action, and with $t$-action induced from that on $\mot$.
  Of course, $\mot_{\bK}$ is then a pure Anderson $t$-motive over $\bK$, and $\basis{\kappa}{d}$ is a $\bK\sp{\tau}$-basis of $\mot_{\bK}$.

  By Theorem \ref{thm:pure-implies-almost-strictly-pure}, there is another $\bK\sp{\tau}$-basis $\basis{\tilde{\kappa}}{d}$ of $\mot_{\bK}$ with respect to which $\mot_{\bK}$ is almost strictly pure. The $\bK\{\tau\}$-basis can be chosen to which $\mot_{\bK}$ is strictly pure if and only if the reciprocal of the weight of $\mot_{\bK}$ is an integer.
  Let $G\in \GL_d(\bK\sp{\tau})$ be the base change matrix, i.e.,
  \[   \svect{\tilde{{\kappa}}}{d}= G\svect{\kappa}{d}. \]
Since each entry of $G$ is a polynomial in $\tau$ which has only finitely many coefficients (and the same holds for its inverse $G^{-1}\in \GL_d(\bK\sp{\tau})$), there exists a finite extension $L$ of $K$ such that $G\in \GL_d(L\sp{\tau})$.

This means that $\basis{\tilde{\kappa}}{d}$ is also a $L\sp{\tau}$-basis of the base extension $\mot_L=L\otimes_K \mot$. Therefore, $\mot_L$ is almost strictly pure with respect to $\basis{\tilde{\kappa}}{d}$, and is strictly pure if and only if $\frac{1}{w}\in\mathbb{Z}$. 
\end{proof}

Next, we translate this result on $t$-motives to a result on Anderson $t$-modules.

\begin{thm}\label{thm:almost-strictly-pure-over-extension}
    Let $K$ be a field containing $\Fq$, and let $\phi$ be a pure Anderson $t$-module over $K$ of dimension $d$ and weight $w$.
Then there exists a finite extension $L$ of $K$, and an almost strictly pure $t$-module $\psi$ over $L$ such that
the base extension of $\phi$ to $L$, i.e.~$\phi^L:\Fq[t]\xrightarrow{\phi} \Mat_d(K\sp{\tau})\hookrightarrow \Mat_d(L\sp{\tau})$ is isomorphic to $\psi$. The $t$-module $\psi$ over $L$ can be chosen to be strictly pure, if and only if $\frac{1}{w}\in\mathbb{Z}$.
\end{thm}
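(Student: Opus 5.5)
The plan is to translate Corollary \ref{cor:almost-strictly-pure-over-finite-extension} to the $t$-module side via Anderson's anti-equivalence, using the dictionary from Section \ref{sec:anderson-t-modules}.

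First, consider the $t$-motive $\mot_\phi=\Mat_{1\times d}(K\sp{\tau})$ with its standard basis $\vect{\kappa}$, so that $t\vect{\kappa}=\phi_t\vect{\kappa}$. Since $\phi$ is pure of weight $w$, $\mot_\phi$ is pure of weight $w(\mot_\phi)=w$ by Anderson's Lemma 1.10.1. We then apply Corollary \ref{cor:almost-strictly-pure-over-finite-extension}. It yields a finite extension $L/K$ and an $L\sp{\tau}$-basis $\tilde{\vect{\kappa}}=G\vect{\kappa}$ of $(\mot_\phi)_L$, with $G\in\GL_d(L\sp{\tau})$, with respect to which $(\mot_\phi)_L$ is almost strictly pure. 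This basis can be chosen to be strictly pure if and only if $\frac1w\in\ZZ$.

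Next, observe that $(\mot_\phi)_L=L\sp{\tau}\otimes_{K\sp{\tau}}\mot_\phi$ is exactly $\mot_{\phi^L}$. Its standard basis is $\vect{\kappa}$, and the $t$-action is still given by $\phi_t$, now viewed in $\Mat_d(L\sp{\tau})$. Define $\psi$ by $\psi_t:=D_{\tilde\kappa}=G\phi_tG^{-1}\in\Mat_d(L\sp{\tau})$. The matrix $\partial\psi_t-\theta\one_d$ is nilpotent, because $\mot_{\phi^L}/\tau\mot_{\phi^L}$ is $(t-\theta)$-power torsion, as explained in Section \ref{sec:anderson-t-modules}. Hence $\psi$ is a $d$-dimensional abelian $t$-module over $L$. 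By Lemma \ref{lem:change-of-basis-is-isomorphic}, more concretely from $\psi_tG=G\phi_t$, the matrix $G$ gives an isomorphism $\phi^L\cong\psi$.

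It remains to check the purity conditions. Since $t^n\tilde{\vect{\kappa}}=\psi_{t^n}\tilde{\vect{\kappa}}$, the leading coefficient condition for $\mot_{\phi^L}$ with respect to $\tilde{\vect\kappa}$ is literally the condition of Definition \ref{defn:almost-strictly-pure-t-module} for $\psi$. So $\psi$ is almost strictly pure, and it is strictly pure when $\frac1w\in\ZZ$.

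For the converse, suppose some strictly pure $\psi$ over $L$ is isomorphic to $\phi^L$. Then $\mot_\psi$ is strictly pure with respect to its standard basis and is isomorphic to $\mot_{\phi^L}$. Weight is an isomorphism invariant, and base change does not alter $d$ and $r$, so $w(\psi)=w$. By \cite[Remark~4.87]{cn-mp:hpqpam}, the weight of a strictly pure $t$-module is $\frac1\ell$ with $\ell\in\ZZ$, so $\frac1w\in\ZZ$.

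The only step I expect to need care is the identification of $(\mot_\phi)_L$ with $\mot_{\phi^L}$, together with the check that the weight is unchanged under base extension. Both are essentially bookkeeping, since the rank and dimension are preserved.
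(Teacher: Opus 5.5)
Your proposal is correct and follows the paper's proof: apply Corollary~\ref{cor:almost-strictly-pure-over-finite-extension} to $\mot_\phi$, take $\psi$ to be the $t$-module attached to $\mot_L=\mot_{\phi^L}$ with respect to the new basis (so $\psi_t=G\phi_tG^{-1}$), and obtain $\phi^L\cong\psi$ from Lemma~\ref{lem:change-of-basis-is-isomorphic}. Your explicit argument for the ``only if'' direction, via \cite[Remark~4.87]{cn-mp:hpqpam} together with the invariance of $d/r$ under isomorphism and base change, simply unpacks what the paper inherits from the corollary (and ultimately from the last step of the proof of Theorem~\ref{thm:pure-implies-almost-strictly-pure}).
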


\begin{proof}
  Let $\mot:=\mot_\phi$ be the Anderson $t$-motive associated to $\phi$. So with respect to the standard basis $\basis{\kappa}{d}$, we have
  \[ t \cdot \svect{\kappa}{d} = \phi_t \cdot  \svect{\kappa}{d}.\]
  
  By Corollary \ref{cor:almost-strictly-pure-over-finite-extension}, there is a finite extension $L$ of $K$, and an $L\sp{\tau}$-basis $\basis{\tilde{\kappa}}{d}$ of $\mot_L$ such that $\mot_L$ is almost strictly pure with respect to that basis. The $L\{\tau\}$-basis can be chosen to which $\mot_L$ is strictly pure if and only if $\frac{1}{w}\in\mathbb{Z}$.

 Now, choose $\psi$ to be the Anderson $t$-module over $L$ corresponding to $\mot_L$ with respect to the basis $\basis{\tilde{\kappa}}{d}$. This Anderson $t$-module is almost strictly pure, and is strictly pure if and only if $\frac{1}{w}\in\mathbb{Z}$.

 Since $\phi^L$ and $\psi$ are both $t$-modules corresponding to $\mot_L$, they are isomorphic by Lemma~\ref{lem:change-of-basis-is-isomorphic}.
\end{proof}

\begin{rem}\label{rem:necessity-of-field-extension}
    At the writing of this paper, it is not clear to the authors when a field extension is really needed. Our algorithm provides a procedure to obtain an almost strictly pure form, but one could also obtain such a form differently. This is also illustrated in Example \ref{ex:weight-1/4}.
    
    We would like to find criteria about this issue in future work.
\end{rem}

\section{Applications}\label{sec:applications}
    In this section, we aim to present two applications of our main results. The first one concerns the tensor product of almost strictly pure $t$-modules, and the second one extends Poonen's Mordell-Weil theorem to pure $t$-modules.

    \subsection{Tensor products of $t$-modules}
    Let $\phi$ and $\psi$ be two almost strictly pure $t$-modules defined over $\bK$. By \cite{cn-mp:hpqpam}, $\mot_\phi$ and $\mot_\psi$ are pure $t$-motives. Thus, $\mot_\phi\otimes_{\bK[t]}\mot_\psi$ defines a pure $t$-motive. When $\phi$ and $\psi$ are Drinfeld modules, there are explicit choices of $\bK\{\tau\}$-basis for $\mot_\phi\otimes_{\bK[t]}\mot_\psi$ (see \cite[\S 2]{yh:tpdmvr} and \cite[Theorem 4.7]{ck:ratpdmttp}) giving the almost strict pureness, while it is relied on some ad. hoc. calculations (see \cite[Remark 4.2.12]{wch:cgltpdm}).\footnote{Be aware that there are some small typos in the computations, e.g.~the $i$-th diagonal entry of the specified matrix $\tilde{B}_r$ is $\kappa_r\kappa_r^{(i-1)}$ and not $\kappa_r^2$.}

     The first application of our main result provides a uniform way to verify the almost strict pureness for $\mot_\phi\otimes_{\bK[t]}\mot_\psi$.

    \begin{thm}\label{thm:tensor-products}
        Let $\phi$ and $\psi$ be two almost strictly pure $t$-modules defined over $\bK$ and $\mot:=\mot_\phi\otimes_{\bK[t]}\mot_\psi$ be the tensor product of their $t$-motives. Then there is an almost strictly pure $t$-module $\rho$ defined over $\bK$ such that
        \[
            \mot\cong\mot_\rho.
        \]
        In other words, $\mot_\phi\otimes_{\bK[t]}\mot_\psi$ is almost strictly pure with respect to an appropriate $\bK\{\tau\}$-basis.
    \end{thm}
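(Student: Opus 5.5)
The plan is to reduce the statement to Theorem~\ref{thm:pure-implies-almost-strictly-pure} by checking that $\mot:=\mot_\phi\otimes_{\bK[t]}\mot_\psi$ is a pure abelian Anderson $t$-motive over $\bK$. The argument has three steps.

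\textbf{Step 1: the factors are pure.} Since $\phi$ and $\psi$ are almost strictly pure, the result of Namoijam and Papanikolas (\cite[Remark~4.87]{cn-mp:hpqpam}) shows that $\mot_\phi$ and $\mot_\psi$ are pure abelian $t$-motives over $\bK$. Their weights are $w_\phi=d_\phi/r_\phi$ and $w_\psi=d_\psi/r_\psi$.

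\textbf{Step 2: the tensor product is a pure abelian $t$-motive.} By \cite[Section~1.11]{ga:tm}, $\mot$ with the diagonal $\tau$-action is again an abelian $t$-motive, and by \cite[Prop.~1.11.1]{ga:tm} it is pure of weight $w_\phi+w_\psi$.
\begin{itemize}
\item The finite freeness over $\bK[t]$ is clear.
\item Since $\bK$ is perfect, \cite[Lemma~1.4.5]{ga:tm} gives freeness of finite rank over $\bK\sp{\tau}$ once $\mot$ is known to be finitely generated over $\bK\sp{\tau}$.
\item The $(t-\theta)$-power-torsion condition on $\mot/\tau\mot$ is part of Anderson's statement that the tensor product of pure abelian $t$-motives is abelian and pure.
\end{itemize}
I will quote these facts rather than reprove them.

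For purity itself, Corollary~\ref{cor:stable-lambda} gives $\bK\ps{\frac1t}$-lattices $\tilde\Lambda_1$ with $t^{u_1}\tilde\Lambda_1=\tau^{v_1}\tilde\Lambda_1$ and $\tilde\Lambda_2$ with $t^{u_2}\tilde\Lambda_2=\tau^{v_2}\tilde\Lambda_2$. After replacing $(u_i,v_i)$ by suitable multiples, we may assume $v_1=v_2=v$. Then $\tilde\Lambda_1\otimes_{\bK\ps{1/t}}\tilde\Lambda_2$ is a lattice in $\bK\ls{\frac1t}\otimes_{\bK[t]}\mot$. Because $\tau$ acts diagonally and $\tau$-semilinearly, we get
\[ \tau^v(\tilde\Lambda_1\otimes\tilde\Lambda_2)=\tau^v\tilde\Lambda_1\otimes\tau^v\tilde\Lambda_2=t^{u_1+u_2}(\tilde\Lambda_1\otimes\tilde\Lambda_2). \]
Here the first equality, that $\tau^v$ of the tensor lattice is spanned by $\tau^v$ of the pure tensors, uses that the linearization of $\tau^v$ on each lattice is surjective. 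This is exactly the equality $t^{u_i}\tilde\Lambda_i=\tau^{v_i}\tilde\Lambda_i$ read as an equality of spans.

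\textbf{Step 3: conclude.} Theorem~\ref{thm:pure-implies-almost-strictly-pure} applied to $\mot$ gives a $\bK\sp{\tau}$-basis $\tilde{\vect\kappa}$ with respect to which $\mot$ is almost strictly pure. Let $\rho$ be the $t$-module with $\rho_t=D_{\tilde\kappa}$, defined as in Section~\ref{sec:anderson-t-modules}. Then $\mot_\rho\cong\mot$ by sending the standard basis to $\tilde{\vect\kappa}$. The module $\rho$ is abelian because $\mot$ is an abelian $t$-motive, and it is almost strictly pure by the Proposition relating the two notions via the standard basis.

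\textbf{Strictly pure refinement.} If $1/w_\phi+1/w_\psi\in\ZZ$ is intended as stated in the introduction, the second part of Theorem~\ref{thm:pure-implies-almost-strictly-pure} applies whenever the reciprocal of $w_\phi+w_\psi$ is an integer. I would state that condition precisely in that form.

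\textbf{Main obstacle.} The main obstacle is only Step~2, verifying that $\mot$ is abelian and pure. I would handle it by citing Anderson's results rather than redoing lattice computations, checking only that the diagonal $\tau$-action is compatible with the completion $\iota$ of Proposition~\ref{prop:iso-of-completions}.
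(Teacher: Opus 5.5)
Your proposal is correct and follows the paper's proof. You cite Anderson for the fact that $\mot_\phi\otimes_{\bK[t]}\mot_\psi$ is a pure abelian $t$-motive, apply Theorem~\ref{thm:pure-implies-almost-strictly-pure}, and take $\rho_t$ to be the matrix of $t$ in the resulting basis, so that $\rho_{t^n}$ has invertible top coefficient. Two minor points: the $\bK\ps{\tfrac{1}{t}}$-lattices in your Step~2 come straight from the definition of purity rather than from Corollary~\ref{cor:stable-lambda}, which produces $\bK\sps{\sigma}$-lattices; and your reading of the strictly pure condition as $1/(w_\phi+w_\psi)\in\ZZ$ is the right one.
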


    \begin{proof}
        Since $\mot=\mot_\phi\otimes_{\bK[t]}\mot_\psi$ defines a pure $t$-motive, by Theorem~\ref{thm:pure-implies-almost-strictly-pure} there is a $\bK\{\tau\}$-basis $(\kappa_1,\dots,\kappa_d)^\tr\in\Mat_{d\times 1}(\mot)$ for $\mot$ and a positive integer $n$ such that
        \[
            t^n\begin{pmatrix}
                \kappa_1\\
                \vdots\\
                \kappa_d
            \end{pmatrix}=D\begin{pmatrix}
                \kappa_1\\
                \vdots\\
                \kappa_d
            \end{pmatrix}
        \]
        for some $D=D_0+D_1\tau+\cdots+D_s\tau^s\in\Mat_d(\bK\{\tau\})$ with $D_s\in\GL_d(\bK)$. Let $B\in\Mat_d(\bK\{\tau\})$ be the matrix representing the $t$-action on $\mot$, that is, $t(\kappa_1,\dots,\kappa_d)^\tr=B(\kappa_1,\dots,\kappa_d)^\tr$. If we define the $t$-module $\rho$ by setting $\rho_t=B$, then $\rho$ is almost strictly pure since $\rho_{t^n}=D$. Furthermore, we have $\mot\cong\mot_\rho$ by construction. The desired result now follows.        
    \end{proof}

    \begin{exmp}\label{exmp:tensor-product-of-rank-2-DFs}
    Let $\phi$ and $\psi$ be two Drinfeld modules over $\bK$ of rank $2$, i.e,~
    \[  \phi_t=\theta+\alpha_1\tau+\alpha_2\tau^2\quad\text{and}\quad\psi_t=\theta+\beta_1\tau+\beta_2\tau^2\]
    with $\alpha_1,\alpha_2,\beta_1,\beta_2\in \bK$, $\alpha_2,\beta_2\ne 0$.

If we take the tensor product $\rho$ of these two Drinfeld modules of rank $2$. It has rank $2\cdot 2=4$ and dimension $2+2=4$, so its weight is $\frac{4}{4}=1$. Using the computation in Khaochim's PhD thesis \cite{ck:ratpdmttp}, one obtains
\[ \rho_t =\begin{pmatrix}
    \theta & 0 & \alpha_1 & \alpha_2 \\
    \alpha_1\tau & \theta & \alpha_2\tau & 0\\
    \beta_1\tau & \beta_2\tau & \theta & 0 \\
    \beta_2\tau^2 & 0 & \beta_1\tau & \theta
\end{pmatrix},\]
and readily computes that $\rho_t^2$ has as leading term the matrix
\[ \begin{pmatrix}
    \alpha_2\beta_2 & 0 & 0 & 0 \\
    \alpha_2\beta_1^{(1)} & \alpha_2\beta_2^{(1)} & 0 & 0 \\
    \beta_2\alpha_1^{(1)} & 0 & \beta_2\alpha_2^{(1)} & 0 \\
     \beta_2\theta^{(2)}+\beta_1\beta_1^{(1)} & \beta_1\beta_2^{(1)} & \beta_2\alpha_1^{(2)} & \beta_2\alpha_2^{(2)}
\end{pmatrix} \tau^2.\]
This verifies that $\rho$ is not strictly pure, but almost strictly pure.

However, Theorem \ref{thm:pure-implies-almost-strictly-pure} implies that $\rho$ is isomorphic to a strictly pure $t$-module.

To compute such a $t$-module, first take the standard $\bK\sp{\tau}$-basis $\vect{\kappa}$ for the $t$-motive such that $t\cdot \vect{\kappa}=\rho_t \vect{\kappa}$, and let $\vect{b}$ such that $b_4=\tau \kappa_1$, and $b_3=\kappa_2$, $b_2=\kappa_3$, $b_1=\kappa_4$.
Then
\[  t\cdot \vect{b} = \begin{pmatrix}
    \theta & \beta_1\tau & 0 & \beta_2\tau \\
    0 &\theta & \beta_2\tau & \beta_1 \\
    0 & \alpha_2\tau & \theta & \alpha_1 \\
    \alpha_2^q \tau & \alpha_1^q\tau & 0 & \theta^q
\end{pmatrix} \cdot \vect{b},\]
and we see that the coefficient matrix of $\tau$ is invertible. Hence, the $\bK\sps{\sigma}$-lattice $\Lambda$ spanned by $\vect{b}$ satisfies $t\Lambda=\tau\Lambda$.

After having found $\vect{b}$, we could apply the algorithm to compute a desired $\bK\sp{\tau}$-basis $\basis{\tilde{\kappa}}{4}$ for $\mot_\rho$.

However, in this explicit simple case, the general strategy reduces to killing the $\tau^2$-term in $\rho_t$ by conjugating with a matrix of the shape
\[  \tilde{G}=\begin{pmatrix}
    1 & && \\
    0 & 1 & & \\
    0 & 0 & 1 & \\
    -g\tau & 0 & 0 & 1
\end{pmatrix}\]
for an appropriate element $g\in \bK$.
Since,
\begin{align*}
    \tilde{G}\rho_t \tilde{G}^{-1} &= \begin{pmatrix}
    1 & && \\
    0 & 1 & & \\
    0 & 0 & 1 & \\
    -g\tau & 0 & 0 & 1
\end{pmatrix}\cdot \begin{pmatrix}
    \theta & 0 & \alpha_1 & \alpha_2 \\
    \alpha_1\tau & \theta & \alpha_2\tau & 0\\
    \beta_1\tau & \beta_2\tau & \theta & 0 \\
    \beta_2\tau^2 & 0 & \beta_1\tau & \theta
\end{pmatrix}\cdot \begin{pmatrix}
    1 & && \\
    0 & 1 & & \\
    0 & 0 & 1 & \\
    g\tau & 0 & 0 & 1
\end{pmatrix} \\
&= 
\begin{pmatrix}
    \theta+\alpha_2 g\tau & 0 & \alpha_1  & \alpha_2 \\
    \alpha_1\tau & \theta & \alpha_2\tau & 0 \\
    \beta_1\tau & \beta_2\tau & \theta & 0 \\
    (\theta-\theta^q)g\tau & 0 & (\beta_1-g\alpha_1^q)\tau & \theta-g\alpha_2^q\tau
\end{pmatrix} +\begin{pmatrix}
    && &  \\
    & &&\\
    & & &\\
   \beta_2-g^{q+1}\alpha_2^q & & &  
\end{pmatrix}\tau^2,
\end{align*}
This means that $g$ has to be a $(q+1)$th root of $\frac{\beta_2}{\alpha_2^q}$, and in that case the $\tau$-coefficient matrix is indeed invertible.
\end{exmp}

    \begin{rem}
        As mentioned above, and as it is seen explicitly in Example \ref{exmp:tensor-product-of-rank-2-DFs}, in the case of two Drinfeld modules of rank $2$, the tensor product is almost strictly pure without the need of a field extension.
    
        An interesting question is whether this holds for the tensor product of any almost strictly pure $t$-modules which leads us to the following conjecture.      
    \end{rem}

\begin{conj}
    Given two almost strictly pure $t$-modules $\phi$ and $\psi$ over $K$. The tensor product $\phi\otimes \psi$ is almost strictly pure if we choose the $K\sp{\tau}$-basis of the corresponding $t$-motive appropriately.
\end{conj}

\begin{rem}
    It is worth to mention that Theorem~\ref{thm:tensor-products} can be also applied to the tensor product of finitely many almost strictly pure $t$-modules, by using some natural induction arguments. A good example of this result is the tensor powers of a fixed Drinfeld module.
\end{rem}

\subsection{Poonen's Mordell-Weil theorem for pure $t$-modules}
    The second application of our main result provides a structure theorem for the $\Fq[t]$-module of $K$-valued points on the pure $t$-module $\phi$ defined over $K$. Recall that Denis defined the global height functions on almost strictly pure $t$-modules in \cite{JTNB_1995__7_1_97_0}. Using the properties of Denis' height functions, Kuan \cite{Kuan2022-kh} proved that the $\Fq[t]$-module $\phi(K)$ is tame for any almost strictly pure $t$-module $\phi$ defined over $K$. This result extends the work of Poonen \cite{bp:lhfmwtdm} when $\phi$ is a Drinfeld modules. Here we say a module over a Dedekind domain $R$ tame if every $R$-submodule of finite rank is finitely generated as an $R$-module. As a consequence, Kuan's result extends Poonen's Mordell-Weil theorem for Drinfeld modules to any almost strictly pure $t$-module. By our main result established in this article, we generalized this structure theorem further to all pure $t$-modules.

    \begin{thm}\label{thm:MordellWeil}
        Let $\phi$ be a pure $t$-module defined over $K$. Then there is a finite extension $L$ over $K$ such that the base extension of $\phi$ to $L$ satisfies Poonen's Mordell-Weil theorem, that is, the $\Fq[t]$-module $\phi^L(L)$ is the direct sum of its finite torsion submodule with a free $\Fq[t]$-module of rank $\aleph_0$.
    \end{thm}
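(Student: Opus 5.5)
The plan is to reduce the statement to Kuan's theorem \cite{Kuan2022-kh} via Theorem \ref{thm:almost-strictly-pure-over-extension}. Here $K$ is a global function field, as in the introduction.

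First, apply Theorem \ref{thm:almost-strictly-pure-over-extension} to $\phi$. This gives a finite extension $L$ of $K$, an almost strictly pure $t$-module $\psi$ over $L$, and an isomorphism $U\colon \phi^L\to\psi$. Concretely, $U\in\GL_d(L\sp{\tau})$ satisfies $U\phi^L_a=\psi_aU$ for all $a\in\Fq[t]$, since an isomorphism has an inverse morphism $U^{-1}\in \Mat_d(L\sp{\tau})$. Because $L$ is a finite extension of a global function field, $L$ is again a global function field, so Kuan's result applies to $\psi$ over $L$. Thus $\psi(L)$ is tame, and by the Poonen-style argument summarized before the statement, $\psi(L)$ is the direct sum of a finite torsion module and a free $\Fq[t]$-module of rank $\aleph_0$.

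Second, transport this structure back along $U$. The matrix $U$, with entries in $L\sp{\tau}$, acts on $L^d$ by evaluating each $\tau^i$ as the $q^i$-th power Frobenius. This gives an $\Fq$-linear map $U\colon L^d\to L^d$ with $U(\phi_a(x))=\psi_a(U(x))$, so it is an $\Fq[t]$-module homomorphism $\phi^L(L)\to\psi(L)$. Since $U^{-1}$ also has entries in $L\sp{\tau}$ and evaluation is multiplicative, with $(AB)(x)=A(B(x))$, the map $U^{-1}$ is the inverse homomorphism. Hence $\phi^L(L)\cong\psi(L)$ as $\Fq[t]$-modules, and the decomposition transfers: the torsion submodule maps to the torsion submodule, so it stays finite, and the free complement stays free of rank $\aleph_0$.

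The main point to check is that Kuan's theorem really yields the full Poonen decomposition, not only tameness, for almost strictly pure $t$-modules over an arbitrary global function field, here $L$. I would cite the paper's own summary before the statement, which says Kuan's result extends Poonen's theorem to any almost strictly pure $t$-module. If only tameness were available, I would add Poonen's deduction, which uses the finiteness of torsion and countability of $L$. The remaining steps are formal.
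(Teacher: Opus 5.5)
Your proposal is correct and follows the paper's own route. The paper applies Theorem~\ref{thm:almost-strictly-pure-over-extension} to obtain $L$, $\psi$ and $U\in\GL_d(L\sp{\tau})$, cites Kuan's result for $\psi(L)$, and transfers the decomposition along the $\Fq[t]$-isomorphism $\phi^L(L)\cong\psi(L)$ induced by $U$; your additional checks (that $L$ is again a global function field, and that $U^{-1}$ gives the inverse map on points) only make explicit details the paper leaves implicit.
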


    \begin{proof}
        By Theorem~\ref{thm:almost-strictly-pure-over-extension}, there is a finite extension $L$ and an almost strictly pure $t$-module $\psi$ such that the base extension of $\phi$ to $L$ is isomorphic $\phi^L$, namely, there is a $L$-isomorphism $U\in\GL_d(L\{\tau\})$ such that $U\phi_t^L=\psi_tU$. By the result of Kuan, we have the $\mathbb{F}_q[t]$-module isomorphism
        \[
            \psi(L)\cong\mathbb{F}_q[t]^{\aleph_0}\oplus\psi(L)_{\mathrm{tor}}
        \]
        where $\psi(L)_{\mathrm{tor}}$ refers to the set of torsion elements of $\psi$ in $L$, which is a finite set. It follows that
        \[
            \phi^L(L)\cong U\big(\phi^L(L)\big)=\psi(L).
        \]
        is the direct sum of its finite torsion submodule with a free $\mathbb{F}_q[t]$-module of rank $\aleph_0$.
    \end{proof}

\section{Examples} \label{sec:examples}

\begin{exmp}\label{ex:weight-1/4}
Let us consider the Anderson $t$-module $\phi$ over $K=\Fq(\theta)$ given by
\[  \phi_t= \begin{pmatrix} \theta+\tau^4+\tau^5 & \tau^6 \\ \tau-\tau^4 & \theta+\tau^4-\tau^5 \end{pmatrix}.\]
If we let $\{\kappa_1,\kappa_2\}$ be the standard $K\sp{\tau}$-basis of the corresponding $t$-motive $\mot$, and $\{b_1,b_2\}$ be given by
\[  \vect{b} = \begin{pmatrix} 1 & 0 \\ \tau^2 & \tau^3 \end{pmatrix}\vect{\kappa}, \]
we compute
\begin{align*}
    t \vect{b} &= \begin{pmatrix} 1 & 0 \\ \tau^2 & \tau^3 \end{pmatrix}\cdot \phi_t\cdot \begin{pmatrix} 1 & 0 \\ \tau^2 & \tau^3 \end{pmatrix}^{-1} \cdot \vect{b} \\
&=    \begin{pmatrix} \theta+\tau^4 & \tau^3 \\ (\theta^{q^2}-\theta^{q^3})\tau^2+\tau^4 & \theta^{q^3}+\tau^4 \end{pmatrix} \vect{b}.
\end{align*} 
Hence, the $K\sps{\sigma}$-lattice $\Lambda$ in $\mothat$ spanned by $\vect{b}$ satisfies
\begin{equation}\label{eq:lattice-example}
     t\Lambda = \tau^4 \Lambda.
\end{equation}
Therefore, $\mot$ (and $\phi$) is pure of weight $\frac{1}{4}$, and by Theorem \ref{thm:pure-implies-almost-strictly-pure}, $\mot$ is even strictly pure with respect to an appropriate $K\sp{\tau}$-basis.

Following the algorithms in this article, we are going to find a basis $\vect{\kappa'}$ of $\mot$ with respect to which $\mot$ is strictly pure. 

First at all, we have to decompose $C:=\begin{pmatrix} 1 & 0 \\ \tau^2 & \tau^3 \end{pmatrix}$ into a product $A\cdot D\cdot B$ as in Theorem \ref{thm:matrix-decomposition}.

Following the algorithm in its proof, we obtain
\begin{align*}
     A&=\begin{pmatrix}
    -1 & \sigma^2 \\ 0 & 1
\end{pmatrix}^{-1} &=& \begin{pmatrix}
    -1 & \sigma^2 \\ 0 & 1
\end{pmatrix},\\
B &= \left( \begin{pmatrix}
    0 & 1 \\ 1 & 0
\end{pmatrix}\begin{pmatrix}
    1 & 0 \\ -\sigma^{-1} & 1
\end{pmatrix} \right)^{-1} &=& \begin{pmatrix}
    0 & 1 \\ 1 & \tau
\end{pmatrix},\\
D &= \begin{pmatrix}
    \sigma^{-1} & 0 \\ 0 & \sigma^{-2}
\end{pmatrix} &=& \begin{pmatrix}
    \tau & 0 \\ 0 & \tau^2
\end{pmatrix}.
\end{align*}
Further, the basis $\vect{b''}$ in the proof of Theorem \ref{thm:pure-implies-almost-strictly-pure}, is 
\[ \vect{b''}=A^{-1}\vect{b}=\begin{pmatrix}
    -1 & \sigma^2 \\ 0 & 1
\end{pmatrix}\cdot \begin{pmatrix} 1 & 0 \\ \tau^2 & \tau^3 \end{pmatrix}\vect{\kappa}
= \begin{pmatrix}
    0 & \tau \\ \tau^2 & \tau^3
\end{pmatrix} \vect{\kappa},
\]
and the matrix $\Theta\in \GL_d(\bK\sls{\sigma})$ satisfying
$t \vect{b''}=\Theta \vect{b''}$ is
\[ \Theta = \begin{pmatrix}
    \theta^q -\tau^2+\tau^4 & (\theta-\theta^q)\sigma^2+1- \tau^3 \\
    (\theta^{q^3}-\theta^{q^2})\tau^2 - \tau^4 & \theta^{q^2}+\tau^2+\tau^4
\end{pmatrix}\]
with $\ord_{\sigma}(\Theta)=-4$, and
\[ c_{-4}(\Theta)= \begin{pmatrix}
    1 & 0 \\ -1 & 1
\end{pmatrix}.\]

Next, we need to find the matrix $G\in \U(k_1,k_2)$ (where $k_1=1$, $k_2=2$ according to the entries of $D$) such that $H:=G\Theta G^{-1}$ satisfies
\[   \ordsigma(H_{ij})>-4+k_j-k_i \forall 1\leq i<j\leq 2,\]
i.e.~$\ordsigma(H_{12})>-4+2-1=-3$, following the algorithm in the proof of Theorem~\ref{thm:conjugation}.

As in our case, $d=2$ and $k_2-k_1=1$, the whole procedure simplifies to finding $S_{[l]}=\begin{pmatrix}
    0 & s_l \\ 0 & 0
\end{pmatrix}\in \Mat_d(\bK)$ for $l=0,1$ such that
first
\[ \Theta_{[1]}:=\begin{pmatrix}
    1 & -s_0 \\ 0 & 1
\end{pmatrix}\Theta \begin{pmatrix}
    1 & s_0 \\ 0 & 1
\end{pmatrix}\]
satisfies $\ordsigma((\Theta_{[1]})_{12})>-4$, and second
\[  \Theta_{[2]}:=\begin{pmatrix}
    1 & -s_1\sigma \\ 0 & 1
\end{pmatrix}\Theta \begin{pmatrix}
    1 & s_1\sigma \\ 0 & 1
\end{pmatrix}\]
satisfies $\ordsigma((\Theta_{[2]})_{12})>-3$.

If we closely follow the proof of Theorem \ref{thm:conjugation}, we would first look for $F\in \GL_2(\bK)$ such that $F^{-1}F^{(-4)}=c_{-4}(\Theta)=\begin{pmatrix}
    1 & 0 \\ -1 & 1
\end{pmatrix}$,
make an LU-decomposition of $F$ as $F=L\cdot U$ with $U$ unipotent, and then take $S_{[0]}=\one_2-U$.

However, since, we already have $\ordsigma(\Theta_{12})=-3>-4$, we can just take $s_0=0$ so that $\Theta_{[1]}=\Theta$.

The condition for $s_1$ is
\[ 0 = c_{-3}((\Theta_{[2]})_{12}) = s_1^{q^4}-s_1-1.\]
And hence, we take $s_1\in \Fq^{\textrm{alg}}$ solving this equation.\footnote{Apparently, there is no solution in $\Fq$ of that equation, as $x^q=x$ for all $x\in \Fq$.}
So the matrix $\tilde{G}$ is
\[  \tilde{G}=D^{-1}\begin{pmatrix}
    1 & -s_1\sigma \\ 0 & 1
\end{pmatrix}D = \begin{pmatrix}
    1 & -s_1^{1/q} \\ 0 & 1
\end{pmatrix},\]
and the desired basis $\vect{\kappa'}$ is given by
\begin{align*}
    \vect{\kappa'} &= \tilde{G}\vect{\kappa''}=\tilde{G}B\vect{\kappa} \\
    &= \begin{pmatrix}
        -s_1^{1/q} & 1-s_1^{1/q}\tau \\ 1 & \tau
    \end{pmatrix}\vect{\kappa}
\end{align*}
with $t$-action
$t\cdot \vect{\kappa'}=\tilde{H}\vect{\kappa'}$ where
\begin{align*}
    \tilde{H} &= \tilde{G}B \phi_t (\tilde{G}B)^{-1}\\
    &= \begin{pmatrix}
        -s_1^{1/q} & 1-s_1^{1/q}\tau \\ 1 & \tau
    \end{pmatrix}  \begin{pmatrix} \theta+\tau^4+\tau^5 & \tau^6 \\ \tau-\tau^4 & \theta+\tau^4-\tau^5 \end{pmatrix} \begin{pmatrix}
        -\tau & 1-s_1\tau \\ 1 & s_1^{1/q}
    \end{pmatrix}    \\
&= \begin{pmatrix}
    \theta & 0 \\ 0 & \theta
\end{pmatrix} +\begin{pmatrix}
    s_1^{1/q} & s_1^{1+(1/q)} \\ -1 & -s_1
\end{pmatrix}(\theta-\theta^q)\tau + \begin{pmatrix}
    -1 & s_1^{1/q}+s_1^q \\ 0 & 1
\end{pmatrix} \tau^2\\
&\hspace{2.8cm}+ \begin{pmatrix}
    s_1^{1/q} & s_1^{q^2+1/q} \\ -1 & -s_1^{q^2}
\end{pmatrix}\tau^3 + \begin{pmatrix}
    1 & s_1^{q^3}-1-s_1^{1/q} \\ 0 & 1
\end{pmatrix} \tau^4
\end{align*}

Since we chose $s_1$ to be a root of the polynomial $x^{q^4}-x-1$ the top coefficient matrix is even the identity matrix. However, for the top coefficient matrix being invertible, we could have chosen any $s_1$.
\end{exmp}

\begin{exmp}\label{ex:Alexis-example}
    Consider the Anderson $t$-module $\phi$ over $K=\mathbb{F}_q(\theta)$ given by
    \[
        \phi_t=\begin{pmatrix}
            \theta+\tau & \\
            \theta\tau^2 & \theta+\tau
        \end{pmatrix}.
    \]
    This is the example closely related to the $2$-dimensional construction given in \cite{al:paspatm}. By performing the algorithm for obtaining the elementary divisors, we get
    \[
        t\mathbb{I}_2-\phi_t\sim\begin{pmatrix}
            1 & \\
             & (t-\tau-\theta)(\sigma^2\theta^{-1})(t-\tau-\theta)
        \end{pmatrix}.
    \]
    It follows by computing the Newton polygon of $(t-\tau-\theta)(\sigma^2\theta^{-1})(t-\tau-\theta)$ in $\bK\sls{\sigma}[t]$ and by \cite[Thm.~7.2]{am:aefam} that $\phi_t$ defines a pure $t$-module of weight $1$.

    Let 
    \[  
        \vect{b} = \begin{pmatrix}
            \theta\tau^2 & \theta+\tau \\
             & \tau 
        \end{pmatrix}\vect{\kappa}.
    \]
    Then
    \begin{align*}
        t\vect{b} &= \begin{pmatrix}
            \theta\tau^2 & \theta+\tau \\
             & \tau 
        \end{pmatrix} \cdot \phi_t\cdot \begin{pmatrix}
            \theta\tau^2 & \theta+\tau \\
             & \tau 
        \end{pmatrix}^{-1} \vect{b}\\
        &=\begin{pmatrix}
            (\theta^q+\theta)+(\theta^{1-q}+1)\tau & -\theta^2\sigma+(\theta^{q^2}-\theta^q-\theta)-\theta^{1-q}\tau\\
            \tau & \theta^{q^2}-\theta^q
        \end{pmatrix}.
    \end{align*}
    Hence, the $K\sps{\sigma}$-lattice $\Lambda$ in $\mothat$ spanned by $\vect{b}$ satisfies
    \begin{equation*}
         t\Lambda = \tau \Lambda.
    \end{equation*}

    Now we decompose the matrix
    \[
        C:=\begin{pmatrix}
            \theta\tau^2 & \theta+\tau \\
             & \tau 
        \end{pmatrix}=A\cdot D\cdot B.
    \]
    In particular, we obtain
    \begin{align*}
        A&=\begin{pmatrix}
            1 & (\theta+\tau)\sigma\\
             & 1
        \end{pmatrix}\begin{pmatrix}
            \theta & \\
             & 1
        \end{pmatrix}\begin{pmatrix}
             & 1\\
            1 & 
        \end{pmatrix}\\
        D&=\begin{pmatrix}
            \tau & \\
             & \tau^2
        \end{pmatrix}\\
        B&=\begin{pmatrix}
             & 1\\
            1 & 
        \end{pmatrix}.
    \end{align*}
    Then we have
    \[ 
        \vect{b''}=A^{-1}\vect{b}=\begin{pmatrix}
             & 1 \\
            \theta^{-1} & -\theta^{-1}(\theta+\tau)\sigma
        \end{pmatrix}\cdot \begin{pmatrix}
            \theta\tau^2 & \theta+\tau \\ 
             & \tau
        \end{pmatrix}\vect{\kappa}= \begin{pmatrix}
             & \tau \\ 
            \tau^2 & 
        \end{pmatrix} \vect{\kappa},
    \]
    and the matrix $\Theta\in \GL_d(\bK\sls{\sigma})$ satisfying
    $t \vect{b''}=\Theta \vect{b''}$ is
    \[ 
        \Theta = \begin{pmatrix}
            \theta^q+\tau & \theta^q\tau \\
             & \theta^{q^2}+\tau
        \end{pmatrix}
    \]
    with $\ord_{\sigma}(\Theta)=-1$, and
    \[ 
        c_{-1}(\Theta)= \begin{pmatrix}
            1 & \theta^q \\
             & 1
        \end{pmatrix}.
    \]
    If we choose $s_0,s_1\in\bK$ with
    \[
        \begin{cases}
            s_0^q-s_0-\theta^q=0\\
            s_1^q-s_1-s_0(\theta^{q^2}-\theta^q)=0
        \end{cases},
    \]
    then the matrix
    \[
        G:=\begin{pmatrix}
            1 & s_0+s_1\sigma\\
             & 1
        \end{pmatrix}
    \]
    has the property that $H:=G\Theta G^{-1}$ satisfies
    \[
        \mathrm{ord}_\sigma(H_{12})>-1+2-1=0.
    \]
    So the matrix $\tilde{G}$ is
    \[  
        \tilde{G}=D^{-1}\begin{pmatrix}
            1 & s_0+s_1\sigma \\
            0 & 1
        \end{pmatrix}D = \begin{pmatrix}
            1 & s_0^{1/q}\tau+s_1^{1/q} \\ 0 & 1
        \end{pmatrix},
    \]
    and the desired basis $\vect{\kappa'}$ is given by
    \begin{align*}
        \vect{\kappa'} &= \tilde{G}\vect{\kappa''}=\tilde{G}B\vect{\kappa} \\
        &= \begin{pmatrix}
            s_0^{1/q}\tau+s_1^{1/q} & 1 \\
            1 & 
        \end{pmatrix}\vect{\kappa}
    \end{align*}
    with $t$-action
    $t\cdot \vect{\kappa'}=\tilde{H}\vect{\kappa'}$ where
    \[
        \tilde{H}=\begin{pmatrix}
            \theta+\tau & (s_0^{1/q}\theta^q+s_1^{1/q}-\theta s_0^{1/q}-s_1)\tau\\
             & \theta+\tau
        \end{pmatrix}=\begin{pmatrix}
            \theta+\tau & \\
             & \theta+\tau
        \end{pmatrix} .
    \]
\end{exmp}

\def\cprime{$'$}

\end{document}